\begin{document}


\newcommand{\C} {{\mathbb C}}                              
\newcommand{\R} {{\mathbb R}}                              

\newcommand{\KG} {{\mathcal{K}} G}
\newcommand{\CG} {{\mathcal{C}} G}                
\newcommand{\Y} {H}
\newcommand{\la} {\langle}
\newcommand{\ra} {\rangle}

\newcommand{\hide}[1]{}

\newcommand{\Sur} {\bold{S}}
\newcommand{\Mar} {\bold{M}}
\newcommand{\pop}   {\mathcal{P}}

\newcommand{\Mod}{\overline{\mathcal M}}     
\newcommand{\uMod}{{\mathcal M}}     
\newcommand{\oM} [1] {\ensuremath{{\mathcal M}_{0,#1}(\R)}}                 
\newcommand{\M} [1] {\ensuremath{{\overline{\mathcal M}}{_{0, #1}(\R)}}}    
\newcommand{\cM} [1] {\ensuremath{{\mathcal M}_{0,#1}}}                     
\newcommand{\CM} [1] {\ensuremath{{\overline{\mathcal M}}{_{0, #1}}}}       


{\makeatletter
 \gdef\xxxmark{%
   \protected@write\@auxout{\def\PAGE{ page }}
     {\@percentchar xxx: section \thesubsubsection \PAGE \thepage}%
   \expandafter\ifx\csname @mpargs\endcsname\relax 
     \expandafter\ifx\csname @captype\endcsname\relax 
       \marginpar{xxx}
     \else
       xxx 
     \fi
   \else
     xxx 
   \fi}
 \gdef\xxx{\@ifnextchar[\xxx@lab\xxx@nolab}
 \long\gdef\xxx@lab[#1]#2{{\bf [\xxxmark #2 ---{\sc #1}]}}
 \long\gdef\xxx@nolab#1{{\bf [\xxxmark #1]}}
 \gdef\turnoffxxx{\long\gdef\xxx@lab[##1]##2{}\long\gdef\xxx@nolab##1{}}%
}

%
%

\theoremstyle{plain}
\newtheorem{thm}{Theorem}
\newtheorem{prop}[thm]{Proposition}
\newtheorem{cor}[thm]{Corollary}
\newtheorem{lem}[thm]{Lemma}
\newtheorem{conj}[thm]{Conjecture}

\theoremstyle{definition}
\newtheorem*{defn}{Definition}
\newtheorem*{exmp}{Example}

\theoremstyle{remark}
\newtheorem*{rem}{Remark}
\newtheorem*{hnote}{Historical Note}
\newtheorem*{nota}{Notation}
\newtheorem*{ack}{Acknowledgments}
\numberwithin{equation}{section}


\title{Deformations of bordered Riemann surfaces and associahedral polytopes}

\subjclass[2000]{14H10, 52B11, 05A19}

\author{Satyan L.\ Devadoss}
\address{S.\ Devadoss: Williams College, Williamstown, MA 01267}
\email{satyan.devadoss@williams.edu}

\author{Timothy Heath}
\address{T.\ Heath: Columbia University, New York, NY 10027}
\email{timheath@math.columbia.edu}

\author{Cid Vipismakul}
\address{C.\ Vipismakul: University of Texas, Austin, TX 78712}
\email{wvipismakul@math.utexas.edu}

\begin{abstract}
We consider the moduli space of bordered Riemann surfaces with boundary and marked points.  Such spaces appear in open-closed string theory, particularly with respect to holomorphic curves with Lagrangian submanifolds.  We consider a combinatorial framework to view the compactification of this space based on the pair-of-pants decomposition of the surface, relating it to the well-known phenomenon of bubbling.  Our main result classifies all such spaces that can be realized as convex polytopes.  A new polytope is introduced based on truncations of cubes, and its combinatorial and algebraic structures are related to generalizations of associahedra and multiplihedra.
\end{abstract}

\keywords{moduli, compactification, associahedron, multiplihedron}

\maketitle

\baselineskip=17pt


%
%
\section{Overview}

The moduli space $\uMod_{g,n}$ of Riemann surfaces of genus $g$ with $n$ marked particles has become a central object in many areas of mathematics and theoretical physics, ranging from operads, to quantum cohomology, to symplectic geometry.  This space has a natural extension by considering Riemann surfaces with \emph{boundary}.
Such objects typically appear in open-closed string theory, particularly with respect to holomorphic curves with Lagrangian submanifolds.
Although there has been much study of these moduli spaces from an analytic and geometric setting, a combinatorial and topological viewpoint has been lacking.  
At a high level, this paper can be viewed as an extension of the work by Liu \cite{liu} on the moduli of $J$-holomorphic curves and open Gromov-Witten invariants.  In particular, Liu gives several examples of moduli spaces of bordered Riemann surfaces, exploring their stratifications.  We provide a combinatorial understanding of this stratification and classify all such spaces that can be realized as convex polytopes, relating it to the well-known phenomenon of bubbling. 

There are several (overlapping) fields which touch upon ideas in this paper.  
Similar to our focus on studying bordered surfaces with marked points, recent work spearheaded by Fomin, Shapiro and Thurston \cite{fst} has established a world of cluster algebras related to such surfaces.   This brings in their notions of triangulated surfaces and tagged arc complexes, whereas our viewpoint focuses on the pair of pants decomposition of these surfaces.  
Another interest in these ideas come from algebraic and symplectic geometery.  Here, an analytic approach can be taken to construct moduli spaces of bordered surfaces and their relationship to Gromov-Witten theory, such as the one by Fukaya and others  \cite{fuk}, or an algebraic method can used, like that of Harrelson, Voronov and Zuniga, to formulate a Batalin-Vilkovisky quantum master equation for open-closed string theory \cite{hvz}.
A third field of intersection comes from the world of operads and algebraic structures.  Indeed, the natural convex polytopes appearing in our setting fit comfortably in the framework of higher category theory and the study of $A_\infty$ and $L_\infty$ structures seen from generalizations of associahedra, cyclohedra and multiplihedra \cite{df}.


An overview of the paper is as follows:  Section~\ref{s:defns} supplies a review of the definitions of the moduli spaces of interest.  Section~\ref{s:exmps} constructs the moduli spaces and provides detailed examples of several low-dimensional cases and their stratifications.  The main theorems classifying the polytopal spaces are provided in Section~\ref{s:convex} as well as an introduction to the associahedron and cyclohedron polytopes.  A new polytope is introduced and constructed in Section~\ref{s:cubes} based on the moduli space of the annulus.  Finally, the combinatorial and algebraic properties of this polytope are explored and related to the multiplihedron in Section~\ref{s:combin}.


\begin{ack}
We thank Chiu-Chu Melissa Liu for her enthusiasm, kindness, and patience in explaining her work, along with Ben Fehrman, Stefan Forcey, Jim Stasheff and  Aditi Vashista for helpful conversations.  We are also grateful to the NSF for partially supporting this work with grant DMS-0353634.  The first author also thanks MSRI for their hospitality, support and stimulating atmosphere in Fall 2009 during the Tropical Geometry and Symplectic Topology programs.
\end{ack}

%
%
\section{Definitions} \label{s:defns}
\subsection{}

We introduce notation and state several definitions as we look at moduli spaces of surfaces with boundary.  Although we cover these ideas rather quickly, most of the detailed constructions behind our statements can be found in Abikoff \cite{ab}, Sep\"{a}l\"{a} \cite[Section 3]{sep}, Katz and Liu \cite[Section 3]{kl} and Liu \cite[Section 4]{liu}.

A smooth connected oriented \emph{bordered Riemann surface} $\Sur$ of type $(g,h)$ has genus $g \geq 0$ with $h \geq 0$ disjoint ordered circles $B_1, \ldots, B_h$ for its boundary.  
We assume the surface is compact whose boundary is equipped with the holomorphic structure induced by a holomorphic atlas on the surface.  Specifically, the boundary circles will always be given the orientation induced by the complex structure.
The surface has a \emph{marking set} $\Mar$ of type $(n, \bold m)$ if there are $n$ labeled marked points in the interior of $\Sur$ (called \emph{punctures}) and $m_i$ labeled marked points on the boundary component $B_i$, where $\bold m = \la m_1, \ldots, m_h \ra$.  
Throughout the paper, we define $m := m_1 + \cdots + m_h$.

\begin{defn}
The set $(\Sur, \Mar)$ fulfilling the above requirements is called a \emph{marked bordered Riemann surface}.  We say $(\Sur, \Mar)$ is \emph{stable} if its automorphism group is finite.
\end{defn}

Figure~\ref{f:bordered}(a) shows an example of $(\Sur, \Mar)$ where $\Sur$ is of type $(1,3)$ and $\Mar$ is of type $(3, \la 1,2,0 \ra)$.
Indeed, any stable marked bordered Riemann surface has a unique hyperboic metric such that it is compatible with the complex structure, where all the boundary circles are geodesics, all punctures are cusps, and all boundary marked points are half cusps.
We assume all our spaces $(\Sur, \Mar)$ are stable throughout this paper.

\begin{figure}[h]
\includegraphics{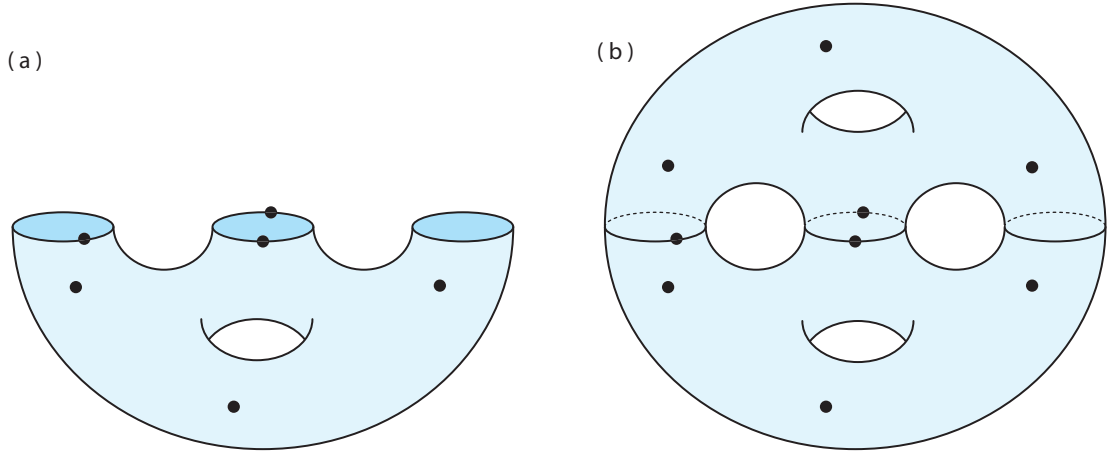}
\caption{(a) An example of a marked bordered Riemann surface (b) along with its complex double.}
\label{f:bordered}
\end{figure}


\subsection{}

The \emph{complex double} $\Sur_\C$ of a bordered Riemann surface $\Sur$ is the oriented double cover of $\Sur$ without boundary.\footnote{
	The complex double and the \emph{Schottky double} of a Riemann surface coincide since it is orientable.
}
It is formed by gluing $\Sur$ and its mirror image along their boundaries; see \cite{ag} for a detailed construction.  For example, the disk is the surface of type $(0,1)$ whose complex double is a sphere, whereas the annulus is the surface of type $(0,2)$ whose complex double is a torus.  
Figure~\ref{f:bordered}(b) shows the complex double of $(\Sur, \Mar)$ from part (a).
In the case when $\Sur$ has no boundary, the double $\Sur_\C$ is simply the trivial disconnected double-cover of $\Sur$.  

The pair $(\Sur_\C, \sigma)$ is called a \emph{symmetric Riemann surface}, where $\sigma: \Sur_\C \to \Sur_\C$ is the anti-holormorphic involution.  The symmetric Riemann surface with a marking set $\Mar$ of type $(n, \bold m)$ has an involution $\sigma$ together with $2n$ distinct interior points $\{p_1, \ldots, p_n, q_1, \ldots, q_n\}$ such that $\sigma(p_i) = q_i$, along with $m$ boundary points $\{b_1, \ldots, b_m\}$ such that $\sigma(b_i) = b_i$.

\begin{defn}
A \emph{pair of pants} is a sphere from which three points or disjoint closed disks have been removed.  A pair of pants can be equipped with a unique hyperbolic structure compatible with the complex structure such that the boundary curves are geodesics.
\end{defn}

Let $\Sur$ be a surface without boundary.  A \emph{decomposition} of $(\Sur, \Mar)$ into pairs of pants is a collection of disjoint pairs of pant on $\Sur$ such that their union covers the entire surface and their pairwise intersection (of their closures) are either empty or a union of marked points and closed geodesic curves on $\Sur$.  Indeed, all the marked points of $\Mar$ appear as boundary components of pairs of pants in any decomposition of $\Sur$.  
A disjoint set of  curves decomposing the surface into pairs of pants will be realized by a unique set of disjoint geodesics since there is only one geodesic in each homotopy class.

We now extend this decomposition to include marked surfaces with boundary.
Consider the complex double $(\Sur_\C, \sigma)$ of a marked bordered Riemann surface $(\Sur, \Mar)$.  If $\pop$ is a decomposition of $\Sur_\C$ into pairs of pants, then $\sigma(\pop)$ is another decomposition into pairs of pants.   The following is a generalization of the work of Sepp\"{a}l\"{a}:

\begin{lem} \cite[Section 4]{liu}
There exists a decomposition of $\Sur_\C$ into pairs of pants $\pop$ such that $\sigma(\pop) = \pop$ and the decomposing curves are simple closed geodesics of $\Sur_\C$.
\end{lem}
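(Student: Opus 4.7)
The plan is to construct $\pop$ from a decomposition of the quotient $\Sur = \Sur_\C/\sigma$ rather than by attempting to symmetrize an arbitrary pants decomposition of $\Sur_\C$. Since $\sigma$-invariant geometric structures on $\Sur_\C$ correspond to structures on the bordered surface $\Sur$, it is cleanest to decompose $\Sur$ first and then pull the decomposition back to the double.

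First, I would decompose the bordered surface $(\Sur, \Mar)$ into elementary pieces that are either ordinary pairs of pants in the interior or \emph{half pants}: topological disks with three distinguished boundary points whose boundary alternates between interior arcs and arcs lying in $\partial \Sur$. The cuts are simple closed curves in the interior of $\Sur$ together with simple arcs whose endpoints lie on $\partial \Sur$ or at cusps. Existence is established by induction on a complexity measure such as $-\chi(\Sur)$ adjusted for the marking data: stability guarantees enough complexity that one can always find a non-trivial arc or curve which either reduces complexity after cutting or peels off a half-pants or pants piece, and the induction terminates since each step strictly decreases the measure.

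Next, I would lift this decomposition to $\Sur_\C$ via the double-cover construction. An interior simple closed curve on $\Sur$ has two disjoint lifts in $\Sur_\C$ exchanged by $\sigma$, while a simple arc on $\Sur$ with endpoints on $\partial \Sur$ lifts to a single simple closed curve in $\Sur_\C$ that is setwise invariant under $\sigma$, because its endpoints are glued to their mirror copies along the fixed locus $\partial \Sur$ of $\sigma$. A half-pants piece accordingly doubles to a $\sigma$-invariant pair of pants, while an interior pants piece lifts to two pants swapped by $\sigma$. The resulting collection $\pop'$ is a pair-of-pants decomposition of $\Sur_\C$ satisfying $\sigma(\pop') = \pop'$ by construction.

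Finally, I would straighten each decomposing curve to its unique geodesic representative in the hyperbolic metric on $\Sur_\C$ induced by the complex structure. Since $\sigma$ is anti-holomorphic it acts as an isometry, so it maps closed geodesics to closed geodesics; uniqueness of the geodesic representative in each free homotopy class then forces $\sigma$-invariance of the homotopy class to descend to $\sigma$-invariance of the representative. Disjointness is preserved because geodesic representatives of a family of pairwise disjoint simple closed curves realize the minimum pairwise intersection numbers and therefore remain disjoint. This yields $\pop$ as required. The main obstacle lies in the first step: establishing the half-pants decomposition of $\Sur$ uniformly across interior punctures, boundary marked points, and the stability condition. Once that topological reduction is in hand, the lift to the double and the passage to geodesic representatives are essentially formal consequences of the $\sigma$-symmetry and the uniqueness of geodesics in their free homotopy classes.
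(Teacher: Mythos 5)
The paper does not actually prove this lemma: it is quoted verbatim from Liu \cite[Section 4]{liu} (who builds on Sepp\"{a}l\"{a} and Abikoff), so there is no in-paper argument to compare against. Your overall strategy --- decompose the quotient surface $\Sur$ into elementary pieces, double the decomposition, then straighten using the fact that the anti-holomorphic $\sigma$ is an isometry of the canonical hyperbolic metric --- is the standard route taken in those references. Your second and third steps are sound: the lifting behaviour (interior closed curves give swapped pairs, arcs with endpoints on $\partial\Sur$ give single $\sigma$-invariant circles) is correct, and uniqueness of geodesic representatives together with realization of minimal intersection numbers does yield a $\sigma$-invariant geodesic decomposition.

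The gap is in the first step, which you correctly flag as the crux but do not carry out, and as stated it would fail. Your inventory of elementary pieces --- interior pairs of pants and hexagonal ``half pants'' whose boundary alternates cut-arcs with arcs of $\partial\Sur$ --- is too narrow to cover stable marked bordered surfaces. The quotient pieces of a $\sigma$-invariant pants decomposition can contain an entire unmarked boundary circle of $\Sur$ (these produce the type-(1) decomposing curves, the 1-loops), an interior puncture, or cusps at boundary marked points lying on the fixed locus. Concretely, the once-punctured annulus (the surface underlying $\Mod_{(0,2)(1,0)}$) is itself a single elementary piece: a pair of pants two of whose boundary components are circles of $\partial\Sur$, hence neither an interior pants nor a union of hexagons; likewise the disk with $m$ boundary marked points decomposes into pieces with cusps on $\partial\Sur$ rather than hexagons. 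With only your two piece types the induction cannot terminate on these base cases. Moreover the induction itself (``stability guarantees a nontrivial arc or curve exists'') is asserted rather than proved: one must specify the complexity measure, check it strictly decreases under cutting, and enumerate the stable surfaces admitting no essential arc or loop. That bookkeeping across punctures, unmarked boundary circles, and boundary marked points is precisely the content the citation to Liu is standing in for.
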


\noindent
Figure~\ref{f:bordered-pants}(a) shows examples of some of the geodesic arcs from a decomposition of $\Sur_\C$, where part (b) shows the corresponding decomposition for $\Sur$.  
Notice that there are three types of decomposing geodesics $\gamma$.
\begin{enumerate}
\item Involution $\sigma$ fixes all points on $\gamma$:  The geodesic must be a boundary curve of $\Sur$, such as the curve labeled $x$ in Figure~\ref{f:bordered-pants}(b).
\item Involution $\sigma$ fixes no points on $\gamma$:  The geodesic must be a closed curve on $\Sur$, such as the curve labeled $y$ in Figure~\ref{f:bordered-pants}(b).  
\item Involution $\sigma$ fixes two points on $\gamma$:  The geodesic must be an arc on $\Sur$, with its endpoints on the boundary of the surface, such as the curve labeled $z$ in Figure~\ref{f:bordered-pants}(b).
\end{enumerate}

\begin{figure}[h]
\includegraphics{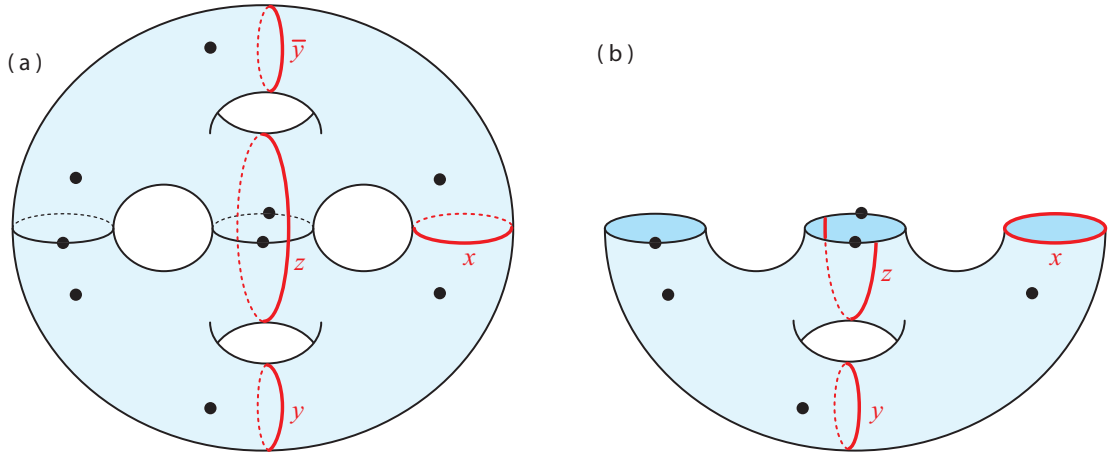}
\caption{Examples of some geodesic arcs from a pair of pants decomposition of (a) the complex double and (b) its marked bordered Riemann surface.}
\label{f:bordered-pants}
\end{figure}

We assign a \emph{weight} to each type of decomposing geodesic in a pair of pants decomposition, corresponding to the number of \emph{Fenchel-Nielsen coordinates} needed to describe the geodesic.  The geodesic of type (2) above has weight two because it needs two Fenchel-Nielsen coordinates to describe it (length and twisting angle), whereas geodesics of types (1) and (3) have weight one (needing only their length coordinates). 
The following is obtained from a result of Abikoff \cite[Chapter 2]{ab}.

\begin{lem} \label{l:weight}
Every pair of pants decomposition of a marked bordered Riemann surface $(\Sur, \Mar)$ of type $(g,h)$ with marking set $(n, \bold m)$ has a total weight of
\begin{equation} \label{e:dim}
6g + 3h - 6 + 2n + m
\end{equation}
from the weighted decomposing curves.
\end{lem}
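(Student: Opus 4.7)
The plan is to lift the decomposition to the complex double $\Sur_\C$ and invoke the classical curve-count for a pair of pants decomposition of a closed Riemann surface with punctures.

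First I compute the topology of $\Sur_\C$. Gluing $\Sur$ to its mirror along the $h$ boundary circles gives a closed surface with $\chi(\Sur_\C)=2\chi(\Sur)=2(2-2g-h)$, so the genus of the double is $g_\C=2g+h-1$. The $n$ interior punctures of $\Mar$ lift to $2n$ interior marked points on $\Sur_\C$ (the pairs $\{p_i,q_i=\sigma(p_i)\}$), while the $m$ boundary marked points lift to $m$ points on the fixed locus of $\sigma$, giving $n_\C=2n+m$ marked points on $\Sur_\C$ viewed as a punctured closed surface.

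Next I recall (or reprove by a short Euler characteristic count, since every pair of pants has $\chi=-1$ and has three boundary/puncture contributions to glue) the classical fact that any pair of pants decomposition of a closed Riemann surface of genus $g'$ with $n'$ punctures uses exactly $3g'-3+n'$ decomposing simple closed curves. Given any pair of pants decomposition of $(\Sur,\Mar)$, doubling each decomposing curve along $\sigma$ produces a $\sigma$-invariant pair of pants decomposition $\pop$ of $\Sur_\C$, which by the classical count must consist of exactly
\[
3(2g+h-1)-3+(2n+m) \;=\; 6g+3h-6+2n+m
\]
simple closed geodesics.

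Finally I match this count against the weighted total on $\Sur$, using the three-type classification already recorded in the excerpt. A type~(1) boundary geodesic on $\Sur$ lifts to a single $\sigma$-fixed curve of $\pop$ (weight $1$ downstairs, one curve upstairs); a type~(2) interior closed curve on $\Sur$ lifts to a pair of $\sigma$-swapped curves of $\pop$ (weight $2$ downstairs, two curves upstairs); a type~(3) arc on $\Sur$ lifts to a single curve of $\pop$ that is fixed setwise but not pointwise by $\sigma$ (weight $1$ downstairs, one curve upstairs). In each case the weight on $\Sur$ agrees with the number of upstairs curves produced, so the total weight of the decomposition of $(\Sur,\Mar)$ equals the cardinality of $\pop$, which is $6g+3h-6+2n+m$.

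The step most likely to need care is verifying that the doubling really yields a bona fide pair of pants decomposition of $\Sur_\C$ and that the three-type list is genuinely exhaustive: one must check that lifted pants are pants, lifted simple closed geodesics remain disjoint and simple, and that a curve fixed setwise but not pointwise by $\sigma$ meets the fixed locus in exactly two points (since its image in $\Sur$ is an arc with two boundary endpoints). Both facts follow from standard properties of the anti-holomorphic involution and from the fact that simple closed geodesics in their free homotopy classes are unique, but they deserve an explicit sentence rather than being left implicit.
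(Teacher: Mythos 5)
Your argument is correct, and it is worth noting that the paper itself offers no proof of this lemma --- it is deferred entirely to Abikoff's Fenchel--Nielsen coordinate count for Teichm\"{u}ller space --- so your double-and-count derivation is a genuinely self-contained alternative that stays inside the paper's own framework (the complex double and the three-type classification of decomposing geodesics). The arithmetic checks out: $\chi(\Sur_\C)=2(2-2g-h)$ gives $g_\C=2g+h-1$, the marked points contribute $n_\C=2n+m$, and the Euler-characteristic count of pants ($P=2g_\C-2+n_\C$ pants, $3P$ boundary circles of which $n_\C$ are cusps and the rest are glued in pairs) yields $3g_\C-3+n_\C=6g+3h-6+2n+m$ curves upstairs. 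Your key observation --- that the weight assigned downstairs (one for boundary loops and arcs, two for interior $2$-loops) equals the number of curves each contributes to the $\sigma$-invariant decomposition of $\Sur_\C$ --- is exactly the right bookkeeping, and it explains \emph{why} the Fenchel--Nielsen weighting (length only versus length-plus-twist) produces the dimension of the double's moduli. Two small points deserve a sentence in a polished write-up: first, the paper's type (2) should be read as a $\sigma$-swapped \emph{pair} of disjoint curves in $\pop$ (the preimage of an interior circle under $\Sur_\C\to\Sur$ is two disjoint copies, since the doubling map is an honest trivial double cover away from the boundary), not a single $\sigma$-invariant curve on which $\sigma$ acts freely; second, when $h=0$ the double is disconnected and $g_\C$ is not the genus of a connected surface, so the count must be applied componentwise --- it still returns $2(3g-3+n)=6g-6+2n$, matching the formula, but the case is worth flagging.
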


\subsection{}

Based on the discussion above, we can reformulate the decomposing curves on marked bordered Riemann surfaces in a combinatorial setting:

\begin{defn}
An \emph{arc} is a curve on $\Sur$ such that its endpoints are on the boundary of $\Sur$, it does not intersect $\Mar$ nor itself, and it cannot be deformed arbitrarily close to a point on $\Sur$ or in $\Mar$.
An arc corresponds to a geodesic decomposing curve of type (3) above.
\end{defn}

\begin{defn}
A \emph{loop} is an arc whose endpoints are identified.  There are two types of loops:  A $1$-loop can be deformed to a boundary circle of $\Sur$ having no marked points, associated to a decomposing curve of type (1) above.  Those belonging to curves of type (2) are called $2$-loops.  
\end{defn}

We consider isotopy classes of arcs and loops.  Two arcs or loops are \emph{compatible} if there are curves in their respective isotopy classes which do not intersect.  
The weighting of geodesics based on their Fenchel-Nielsen coordinates now extends to weights assigned to arcs and loops:  Every arc and 1-loop has weight one, and a 2-loop has weight two.

Figure~\ref{f:legal} provides examples of marked bordered Riemann surfaces (all of genus 0).  Parts (a~--~c) show examples of compatible arcs and loops.  Part (d) shows examples of arc and loops that are not allowed.  Here, either they are intersecting the marked point set $\Mar$ or they are trivial arcs and loops, which can be deformed arbitrarily close to a point on $\Sur$ or in $\Mar$.  We bring up this distinction to denote the combinatorial difference between our situation and the world of arc complexes, recently highlighted by Fomin, Shapiro, and Thurston \cite{fst}.

\begin{figure}[h]
\includegraphics{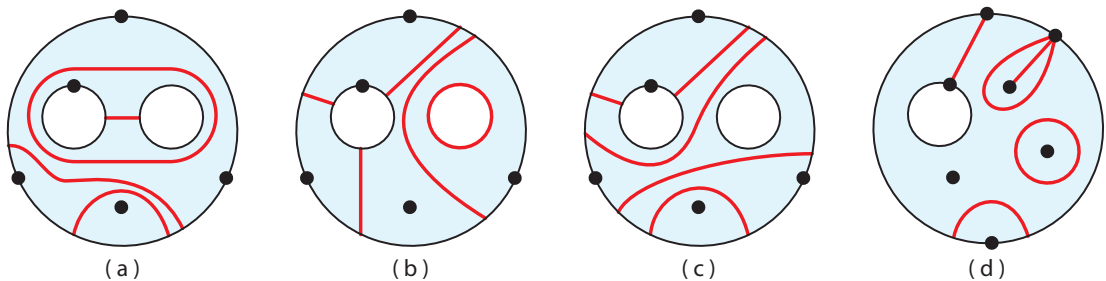}
\caption{Parts (a -- c) show compatible arcs and loops, whereas (d) shows arcs and loops that are not allowed.}
\label{f:legal}
\end{figure}


%
%
\section{The moduli space} \label{s:exmps}
\subsection{}

Given the definition of a marked bordered Riemann surfaces, we are now in position to study its moduli space.  For a bordered Riemann surface $\Sur$ of type $(g,h)$ with marking set $\Mar$ of type $(n, \bold m)$, we denote $\Mod_{(g,h)(n, \bold m)}$ as its compactified moduli space.  Analytic methods can be used for the construction of this moduli space which follow from several important, foundational cases. 
Indeed, the moduli space $\Mod_{(0,0)(n,0)}$ is simply the classic Deligne-Mumford-Knudsen space $\Mod_{0,n}$ coming from GIT quotients.
The topology of the moduli space $\Mod_{g,n}$ of algebraic curves of genus $g$ with $n$ marked points was provided by Abikoff \cite{ab}.   Later, Sepp\"{a}l\"{a} gave a topology for the moduli space of \emph{real} algebraic curves \cite{sep}.  Finally, Liu modified this for marked bordered Riemann surfaces;  the reader is encouraged to consult \cite[Section 4]{liu} for a detailed treatment of the construction of $\Mod_{(g,h)(n, \bold m)}$.

\begin{thm} \cite{liu} \label{t:dim}
The moduli space $\Mod_{(g,h)(n, \bold m)}$ of marked bordered Riemann surfaces is equipped with a (Fenchel-Nielson) topology which is Hausdorff.  The space is compact and orientable with real dimension $6g + 3h - 6 + 2n + m.$
\end{thm}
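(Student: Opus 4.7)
The plan is to construct $\Mod_{(g,h)(n,\bold m)}$ as a quotient of a Teichmüller-type space by the appropriate mapping class group of $(\Sur,\Mar)$, using the Fenchel-Nielsen coordinates already forced upon us by the preceding discussion. The dimension count is essentially Lemma~\ref{l:weight} repackaged as a local coordinate fact: fix any $\sigma$-invariant pair of pants decomposition $\pop$ of the complex double, as produced above, and attach to each $\sigma$-equivalence class of decomposing geodesics its Fenchel-Nielsen parameters. Arcs and $1$-loops on $\Sur$ contribute a single length parameter (their twist is constrained to be trivial by $\sigma$-symmetry), while $2$-loops contribute an honest length-twist pair, so the real number of free parameters is exactly the total weight $6g+3h-6+2n+m$.

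From here Hausdorffness is essentially automatic: two inequivalent stable marked bordered surfaces either determine different combinatorial types of the nodal stratum they lie in (and are then separable by disjoint open neighborhoods of the strata), or they lie in a common open FN chart and are distinguished by their coordinate values. Orientability follows because each FN chart is a product of open intervals (for lengths) with lines or circles (for twists), and the transition maps coming from the elementary moves between pants decompositions have positive Jacobian; this is the standard orientation of Teichmüller space specialized to the symmetric setting, and the $\sigma$-equivariance only cuts coordinates in half rather than reversing orientation.

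The main obstacle is compactness. A sequence $[(\Sur,\Mar)_k]$ of hyperbolic marked bordered surfaces need not have a convergent subsequence within the open stratum because lengths of some decomposing geodesics may tend to zero; this is precisely the bubbling phenomenon. One must therefore show that after extracting a subsequence and passing to a common pants decomposition (via the action of the mapping class group to absorb Dehn twists), each FN length either stays in a compact subinterval of $(0,\infty)$ or tends to $0$, while twist coordinates remain bounded modulo those Dehn twists. The resulting limit, after collapsing the vanishing-length curves, is a stable nodal marked bordered surface, hence another point of $\Mod_{(g,h)(n,\bold m)}$. Executing this rigorously requires careful stratification of degeneration types by the three geodesic classes identified earlier (interior $2$-loops producing interior nodes, $1$-loops producing boundary nodes that glue two boundary components, arcs pinching a boundary component into two), together with verification that the limits remain stable; this is precisely the analysis carried out in the Deligne--Mumford--Seppälä framework and adapted to the bordered, marked setting by Liu \cite[Section~4]{liu}, to which we appeal.
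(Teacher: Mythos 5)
The paper offers no proof of this statement: Theorem~\ref{t:dim} is imported wholesale from Liu \cite{liu}, with the text only directing the reader to \cite[Section 4]{liu} and the antecedent constructions of Abikoff \cite{ab} and Sepp\"{a}l\"{a} \cite{sep}. Your outline follows exactly the construction that citation packages --- Fenchel--Nielsen coordinates on a $\sigma$-invariant pants decomposition for the dimension count (which is just Lemma~\ref{l:weight} restated), and the Deligne--Mumford--Sepp\"{a}l\"{a} degeneration analysis for compactness --- and since you, like the paper, ultimately delegate the substantive steps to Liu, the only caveat worth recording is that your orientability argument (positive Jacobians of Fenchel--Nielsen transition maps, $\sigma$-equivariance ``cutting coordinates in half'') is asserted rather than proved and is genuinely nontrivial for the quotient by the mapping class group, so it too should be regarded as borrowed from \cite{liu} rather than established by the sketch.
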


The dimension of this space should be familiar:  It is the total weight of the decomposing curves of the surface given in Equation~\eqref{e:dim}.  Indeed, the stratification of this space is given by collections of compatible arcs and loops (corresponding to decomposing geodesics) on $(\Sur, \Mar)$ where a collection of geodesics of total weight $k$ corresponds to a (real) codimension $k$ stratum of the moduli space.
The compactification of this space is obtained by the \emph{contraction} of the arcs and loops --- the degeneration of a decomposing geodesic $\gamma$ as its length collapses to $\gamma = 0$.  

There are four possible results obtained from a contraction; each one of them could be viewed naturally in the complex double setting.  The first is the contraction of an arc with endpoints on the same boundary, as shown by an example in Figure~\ref{f:contract-arc}. 
\begin{figure}[h]
\includegraphics{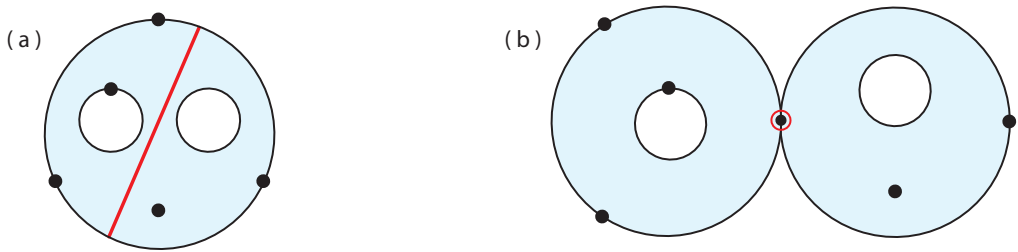}
\caption{Contraction of an arc with endpoints on the same boundary.}
\label{f:contract-arc}
\end{figure}
The arc of part (a) becomes a double point on the boundary in (b), being shared by the two surfaces.\footnote{
We abuse terminology slightly by sometimes refering to these double points as marked points.}
Similarly, one can have a contraction of an arc with endpoints on two distinct boundary components as pictured in Figure~\ref{f:contract-arc-2}.  In both these cases, we are allowed to \emph{normalize} this degeneration by pulling off (and doubling) the nodal singularity, as in Figure~\ref{f:contract-arc-2}(c).
\begin{figure}[h]
\includegraphics{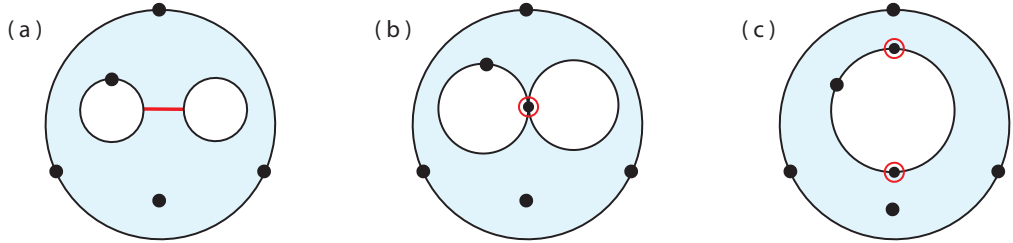}
\caption{Contraction of an arc with endpoints on distinct boundary components.}
\label{f:contract-arc-2}
\end{figure}
The other two possibilities of contraction lie with loops, as displayed in Figure~\ref{f:contract-loop}.  Part (a) shows the contraction of a 1-loop collapsing into a puncture, whereas part (b) gives the contraction of a 2-loop resulting in the classical notion of \emph{bubbling}.

\begin{figure}[h]
\includegraphics{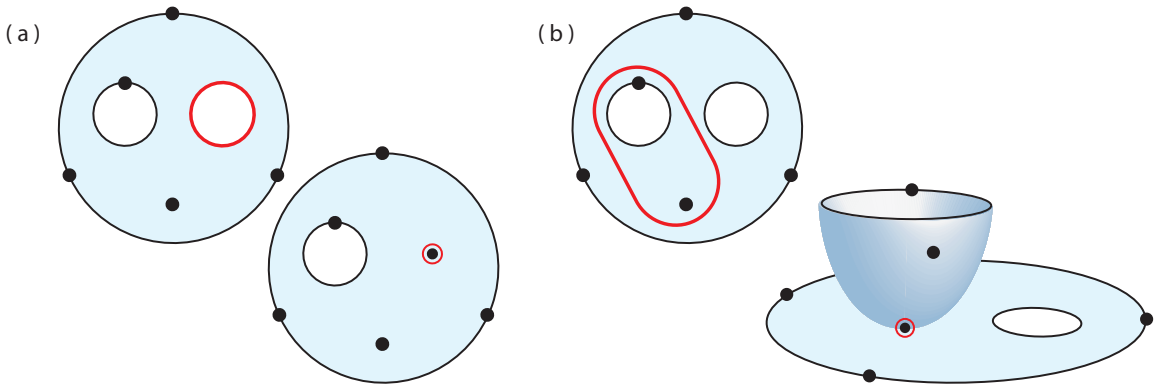}
\caption{Contraction of (a) the 1-loop and (b) the 2-loop.}
\label{f:contract-loop}
\end{figure}

\subsection{}

We presents several examples of low-dimensional marked bordered moduli spaces.

\begin{exmp}
Consider the moduli space $\Mod_{(0,1)(2,0)}$ of a disk with two punctures.  By Theorem~\ref{t:dim}, the dimension of this space is one.  There are only two boundary strata, one with an arc and another with a loop, showing this space to be an interval.  Figure~\ref{f:m0120} displays this example.
\begin{figure}[h]
\includegraphics{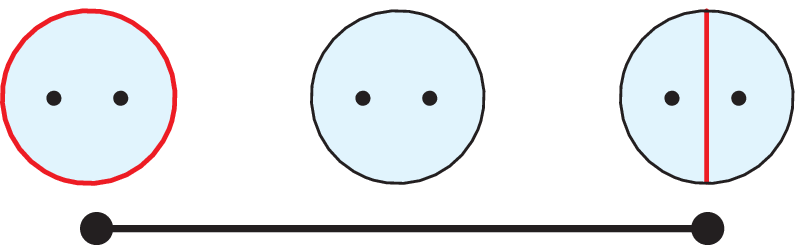}
\caption{The space $\Mod_{(0,1)(2,0)}$ is an interval.}
\label{f:m0120}
\end{figure}
The left endpoint is the moduli space of a thrice-punctured sphere, whereas the right endpoint is the product of the moduli spaces of  punctured disks with a marked point on the boundary.
\end{exmp}


\begin{exmp}
The moduli space $\Mod_{(0,1)(1,2)}$ can be seen as a topological disk with five boundary strata.  It has three vertices and two edges, given in Figure~\ref{f:m0112}, and is two-dimensional due to Theorem~\ref{t:dim}.  Note that the central vertex in this disk shows this space is \emph{not} a CW-complex.  The reason for this comes from the \emph{weighting} of the geodesics.  Since the loop drawn on the surface representing the central vertex has weight two, the codimension of this strata increases by two.

\begin{figure}[h]
\includegraphics{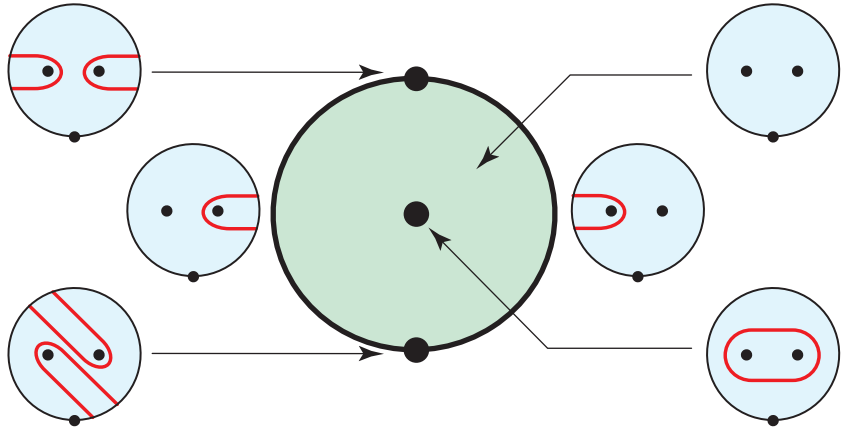}
\caption{The space $\Mod_{(0,1)(1,2)}$ is a topological disk.}
\label{f:m0112}
\end{figure}

Figure~\ref{f:twovertices} shows why there are two distinct vertices on the boundary of this moduli space.  Having drawn one arc cuts the boundary of the disk into two pieces, as labeled in (a).  The two ways of adding a second arc, shown in (b) and (c), results in the two vertices. Note that this simply amounts to relabeling the collision of the punctures with the boundary as given by (d) and (e).
\end{exmp}

\begin{figure}[h]
\includegraphics{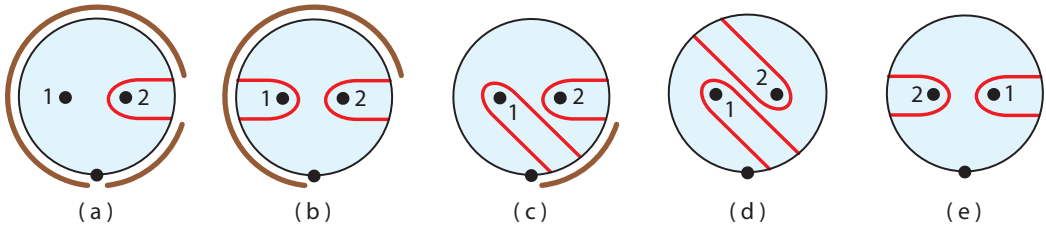}
\caption{The two vertices on the boundary of  $\Mod_{(0,1)(1,2)}$.}
\label{f:twovertices}
\end{figure}


\begin{exmp}
Thus far, we have considered moduli spaces of surfaces with one boundary component.  Figure~\ref{f:m02011} shows an example with two boundary components, the space $\Mod_{(0,2)(0,\la 1,1 \ra)}$.  
Like Figure~\ref{f:m0112}, the space is a topological disk, though with a different stratification of boundary pieces.  It too is not a CW-complex, again due to a loop of weight two.
\end{exmp}

\begin{figure}[h]
\includegraphics{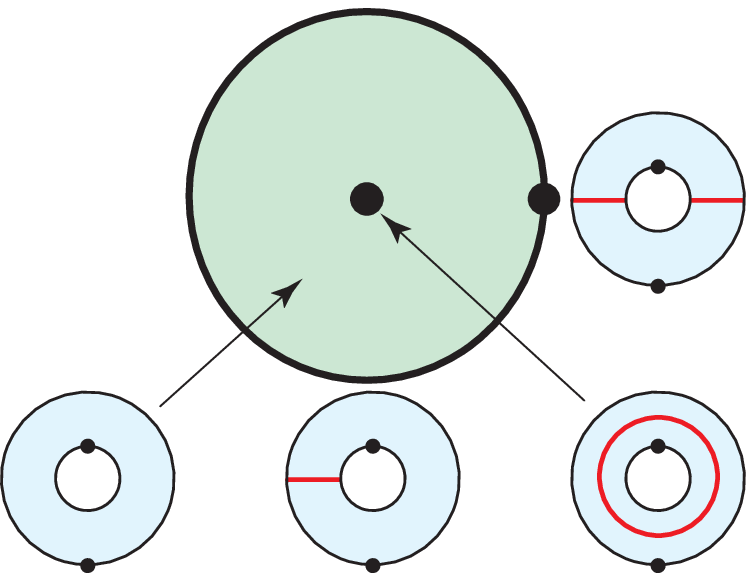}
\caption{The space $\Mod_{(0,2)(0,\la 1,1 \ra)}$ is a topological disk.}
\label{f:m02011}
\end{figure}


\begin{exmp}
The moduli space $\Mod_{(0,2)(1,0)}$ is the pentagon, as pictured in Figure~\ref{f:m0210}.  As we will see below, this pentagon can be reinterpreted as the 2D associahedron.
\end{exmp}

\begin{figure}[h]
\includegraphics{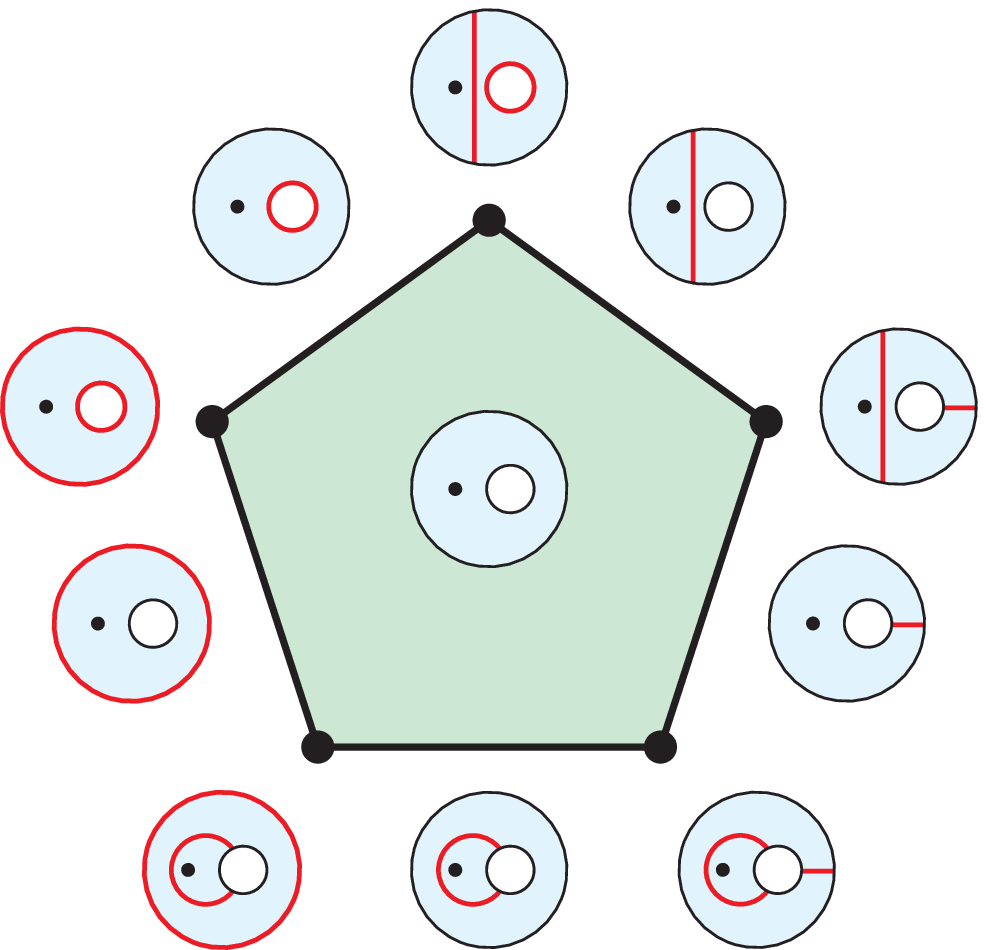}
\caption{The space $\Mod_{(0,2)(1,0)}$ is the 2D associahedron.}
\label{f:m0210}
\end{figure}


\begin{exmp}
By Theorem~\ref{t:dim}, the moduli space $\Mod_{(0,3)(0,0)}$ is three-dimensional.  Figure~\ref{f:m0300} shows the labeling of this space, seen as a polyhedron with three quadrilaterals and six pentagons.  Similar to above, this can viewed as 3D associahedron.
\end{exmp}

\begin{figure}[h]
\includegraphics{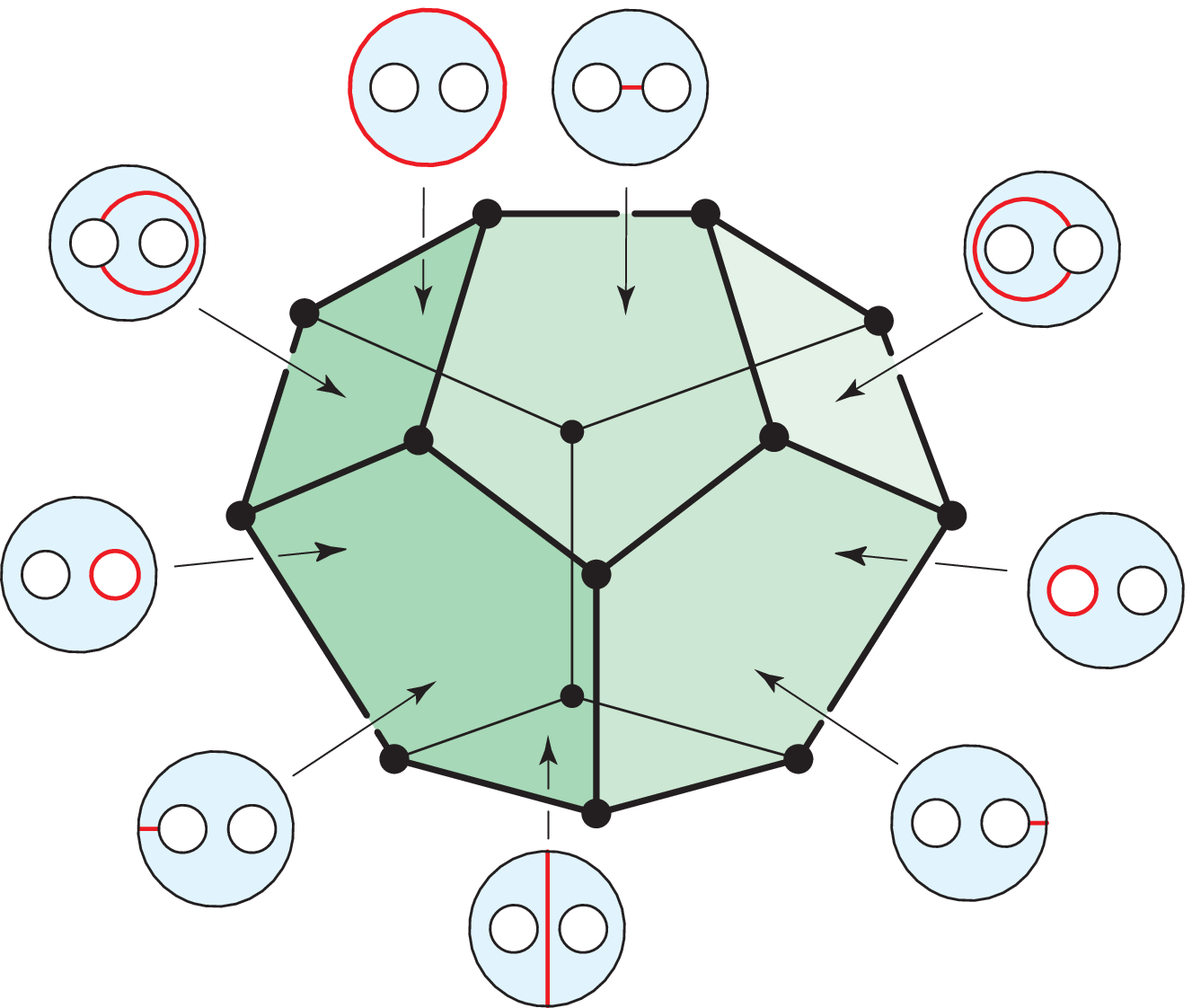}
\caption{The space $\Mod_{(0,3)(0,0)}$ is the 3D associahedron.}
\label{f:m0300}
\end{figure}

%
%
\section{Convex Polytopes} \label{s:convex}

\subsection{}
From the previous examples, we see some of these moduli spaces are convex polytopes whereas others fail to even be CW-complexes.  As we see below, those with polytopal structures are all examples of the associahedron polytope.

\begin{defn}
Let $A(n)$ be the poset of all diagonalizations of a convex $n$-sided polygon, ordered such that $a \prec a'$ if $a$ is obtained from $a'$ by adding new noncrossing diagonals.  The \emph{associahedron} $K_{n-1}$ is a convex polytope of dimension $n-3$ whose face poset is isomorphic to $A(n)$.
\end{defn}

\noindent
The associahedron was originally defined by Stasheff for use in homotopy theory in connection with associativity properties of $H$-spaces \cite{jds}.  The vertices of $K_n$ are enumerated by the Catalan numbers $C_{n-1}$  and its construction as a polytope was given independently by Haiman (unpublished) and Lee \cite{lee}.

\begin{prop} \label{p:4assoc}
Four associahedra appear as moduli spaces of marked bordered surfaces:
\begin{enumerate}
\item $\Mod_{(0,0)(3,0)}$ is the 0D associahedron $K_2$.
\item $\Mod_{(0,1)(2,0)}$ is the 1D associahedron $K_3$.
\item $\Mod_{(0,2)(1,0)}$ is the 2D associahedron $K_4$.
\item $\Mod_{(0,3)(0,0)}$ is the 3D associahedron $K_5$.
\end{enumerate}
\end{prop}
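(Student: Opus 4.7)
The plan is to verify each case by combining the dimension count from Theorem~\ref{t:dim} with a direct enumeration of the boundary strata via compatible arcs and loops, and then matching this stratification with the diagonalization poset $A(n)$ of an appropriate polygon. Applied to the four tuples, the formula $6g + 3h - 6 + 2n + m$ returns $0$, $1$, $2$, and $3$, agreeing with the dimensions of $K_2, K_3, K_4, K_5$. Case~(1) is immediate: the thrice-punctured sphere admits no nontrivial arcs or loops, and its moduli is a single point, which is $K_2$.

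For the remaining three cases I would classify the nontrivial weight-$1$ decomposing curves (arcs and 1-loops) up to isotopy, discarding any configuration that leaves an unstable piece after cutting. This yields two strata in case~(2) (the unique arc separating the punctures and the boundary 1-loop); five strata in case~(3) on the annulus with one puncture (one arc on each of the two boundaries separating the puncture, one arc-stratum joining the two boundaries, and one 1-loop per boundary); and nine strata in case~(4) on the pair of pants (one arc on each boundary separating the other two holes, one arc-stratum between each pair of distinct boundaries, and one 1-loop per boundary). These match the $2$, $5$, and $9$ diagonals of a $4$-, $5$-, and $6$-gon respectively. Since none of these surfaces admits a simple closed curve which is not parallel to a boundary, no 2-loops appear, and so every stratum is generated by these weight-$1$ faces.

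To finish, I would verify that compatibility of curves (simultaneous realization without intersection) corresponds to non-crossing of the matched diagonals, giving an isomorphism of posets between the boundary stratification and $A(n)$. Together with the Hausdorff, compact, orientable topology from Theorem~\ref{t:dim}, this identifies each moduli with the corresponding associahedron. The main obstacle is the bookkeeping in cases~(3) and~(4): one must recognize that two isotopy classes of arc between the same pair of boundaries, differing by which side of an intervening puncture or hole they traverse, determine the same boundary stratum, since both produce isomorphic pinched surfaces after normalization. Once this collapse is accounted for, a finite case check matches the $5 = C_3$ triangulations of the pentagon to the vertices in case~(3), and the $21$ compatible pairs and $14 = C_4$ compatible triples of arcs/loops in case~(4) to the edges and vertices of the 3D associahedron $K_5$.
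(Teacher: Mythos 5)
Your proposal is correct and follows essentially the same route as the paper: the paper's proof simply cites the worked examples (Figures~\ref{f:m0120}, \ref{f:m0210}, \ref{f:m0300}), which encode exactly the enumeration of arcs, $1$-loops, and their compatibilities that you carry out explicitly, matched against the diagonal poset $A(n)$. Your additional observations --- that no $2$-loops occur on these three surfaces and that arcs differing by which side of a puncture or hole they pass determine the same stratum of the moduli space --- are the correct justifications for what the figures display implicitly.
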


\begin{proof}
The space $\Mod_{(0,0)(3,0)}$ is simply a point:  it is a pair of pants, thus trivially having a unique pair of pants decomposition.  The remaining three cases  follow from the examples depicted above in  Figures~\ref{f:m0120}, \ref{f:m0210} and \ref{f:m0300} respectively.  Figure~\ref{f:4assoc} summarizes the four surfaces encountered.
\end{proof}

\begin{figure}[h]
\includegraphics{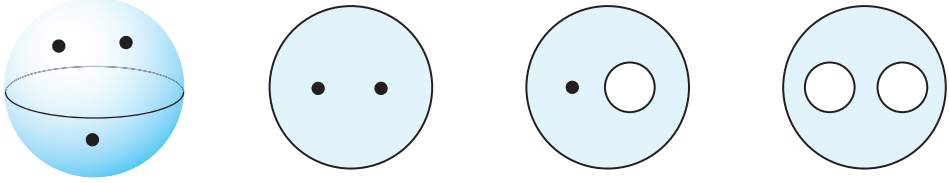}
\caption{Four surfaces whose moduli spaces yield associahedra given in Proposition~\ref{p:4assoc}.}
\label{f:4assoc}
\end{figure}

It is natural to ask which other marked bordered moduli spaces are convex polytopes, carrying a rich underlying combinatorial structure similar to the associahedra.  In other words, we wish to classify those moduli spaces whose stratifications are identical to the face posets of polytopes. We start with the following result:

\begin{thm} \label{t:polytope}
The following moduli spaces are polytopal.
\begin{enumerate}
\item $\Mod_{(0,1)(0,m)}$ with $m$ marked points on the boundary of a disk.
\item $\Mod_{(0,1)(1,m)}$ with $m$ marked points on the boundary of a disk with a puncture.
\item $\Mod_{(0,2)(0, \la m, 0 \ra)}$ with $m$ marked points on one boundary circle of an annulus.
\end{enumerate}
\end{thm}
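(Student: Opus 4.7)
The plan is to handle the three cases uniformly by identifying the stratification of each moduli space with the face poset of a specific convex polytope, using the correspondence between strata and compatible collections of arcs and loops established in Section~\ref{s:exmps}. The target dimensions predicted by Theorem~\ref{t:dim} are $m-3$, $m-1$, and $m$ respectively.

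The first step is to verify that no 2-loops can appear on any of these three surfaces, since a 2-loop carries weight two and therefore produces a codimension-two stratum with no codimension-one covering face --- precisely the obstruction to polytopality displayed by the non-polytopal examples $\Mod_{(0,1)(1,2)}$ and $\Mod_{(0,2)(0,\la 1,1\ra)}$ (Figures~\ref{f:m0112} and~\ref{f:m02011}). For case~(1), the disk carries no interior punctures and its unique boundary is fully marked, so every essential simple closed curve either bounds a trivial sub-disk or is isotopic to a portion of the marked boundary; no loops of any type occur and all decomposing curves are arcs. For case~(2), the puncture behaves as an unmarked cusp of the bordered surface, and any essential simple closed curve on the once-punctured disk is isotopic to that cusp; this corresponds to a type~(1) decomposing geodesic of the complex double and is therefore a 1-loop. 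For case~(3), the annulus has one unmarked boundary circle, and every essential simple closed curve is either isotopic to that unmarked boundary (a 1-loop by definition) or else bounds a disk and is trivial.

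Once only weight-one decomposing curves appear, the poset of compatible collections of arcs and 1-loops is graded by cardinality, so its $k$-th graded piece indexes codimension-$k$ strata --- the numerical signature of a face poset of a polytope. It then remains to identify the polytope explicitly in each case. For case~(1), arcs on the disk with $m$ boundary marks are in bijection with diagonals of an $m$-gon, with compatibility corresponding to non-crossing; this is the Haiman--Lee realization of the associahedron $K_{m-1}$. For case~(2), arcs on the once-punctured disk together with the single isotopy class of 1-loop encircling the puncture realize the face poset of the cyclohedron of dimension $m-1$, introduced in the next subsection. For case~(3), the combinatorial data of arcs and 1-loops on the annulus with marks on one side is precisely what drives the new polytope constructed in Section~\ref{s:cubes}; this family is the motivating example for that construction.

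The main obstacle lies in case~(3): while (1) and (2) reduce to classical polytopal realizations (Haiman--Lee for the associahedron, and the analogous Bott--Taubes/Markl construction for the cyclohedron), for the annulus no prior realization exists. Polytopality there must be established in tandem with an explicit geometric construction, which is carried out in Section~\ref{s:cubes} by exhibiting the space as a suitable truncation of a cube and matching its face structure face-by-face with the compatibility poset of arcs and 1-loops on the annulus.
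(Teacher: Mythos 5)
Your overall architecture coincides with the paper's: Theorem~\ref{t:polytope} is proved there by deferral --- case (1) to Proposition~\ref{p:assoc} (arcs on the marked disk dualize to noncrossing diagonals of an $m$-gon, giving $K_{m-1}$), case (2) to Proposition~\ref{p:cyclo} (pairs of symmetric diagonals and diameters of a centrally symmetric $2m$-gon, giving $W_m$), and case (3) to Theorem~\ref{t:moduli-cube} (the iterated cube truncation producing the halohedron). Your preliminary observation that these are precisely the surfaces admitting no weight-two loops is also the paper's stated explanation, appearing as the remark following Theorem~\ref{t:notpolytope}, and your treatment of cases (1) and (3) is correct.

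Case (2), however, contains a concrete error. On the once-punctured disk the simple closed curve encircling the puncture is \emph{not} a $1$-loop: a $1$-loop is by definition a loop isotopic to an unmarked \emph{boundary circle} of $\Sur$, i.e.\ a type-(1) geodesic fixed pointwise by the involution on the complex double. A curve around an interior puncture lifts to a pair of interior curves exchanged (not fixed) by the involution --- the type-(2), weight-two configuration you are trying to exclude --- and it is excluded here only because it can be deformed arbitrarily close to a point of $\Mar$ and is therefore a trivial loop, disallowed from the arc/loop complex as in Figure~\ref{f:legal}(d). The codimension-one stratum you actually want is the \emph{arc}, with both endpoints on the boundary, separating the puncture from all $m$ marked points; under the duality of Proposition~\ref{p:cyclo} these arcs are exactly the diameters of the symmetric $2m$-gon. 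As written, your facet list for case (2) --- all arcs \emph{plus} one loop --- has one facet too many and its poset would not match the face poset of $W_m$. The fix is simply to delete the purported loop facet and note that every decomposing curve on $\Mod_{(0,1)(1,m)}$ is an arc of weight one; with that correction your argument aligns with the paper's.
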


\noindent
The first two cases will be proven below in Propositions~\ref{p:assoc} and~\ref{p:cyclo}, where they are shown to be the \emph{associahedron} and \emph{cyclohedron} respectively, and their surfaces are depicted in Figure~\ref{f:polytope-moduli}(a) and (b).  We devote a later section to prove the third case in Theorem~\ref{t:moduli-cube}, a new convex polytope called the \emph{halohedron}, whose surface is given in part (c) of the figure.
It turns out that these are the only polytopes appearing as moduli of marked bordered surfaces.

\begin{figure}[h]
\includegraphics{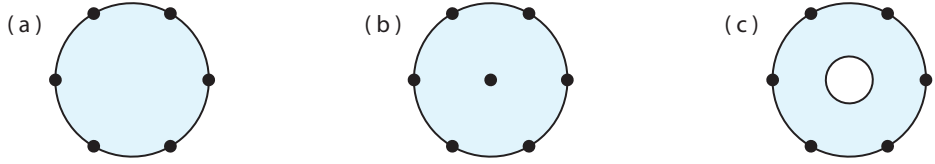}
\caption{(a) Associahedra, (b) cyclohedra, (c) halohedra.}
\label{f:polytope-moduli}
\end{figure}

\begin{rem}
Parts (1) and (2) of Theorem~\ref{t:polytope} can be reinterpreted as types $A$ and $B$ generalized associahedra of Fomin and Zelevinsky \cite{fz}.   Therefore, it is tempting to think that this new polytope in part (3) could be the type $D$ version.  This is, however, not the case, as we see below.  In particular, the 3D type $D$ generalized associahedron is simply the classical associahedron, whereas the 3D polytope of part (3) is different, as displayed on the right side of Figure~\ref{f:cube-y3}.
\end{rem}
 
\begin{thm} \label{t:notpolytope}
All other moduli spaces of stable bordered marked surfaces not mentioned in Proposition~\ref{p:4assoc} and Theorem~\ref{t:polytope} are not polytopes.
\end{thm}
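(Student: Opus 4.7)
The plan is to isolate a single structural obstruction --- the presence of a \emph{2-loop} on $(\Sur, \Mar)$ --- and show it is present in every type $(g,h)(n,\bold m)$ outside the polytopal list of Proposition~\ref{p:4assoc} and Theorem~\ref{t:polytope}. By definition a 2-loop is an essential simple closed curve on $\Sur$ which is not isotopic to a boundary component carrying no marked points, and by Lemma~\ref{l:weight} it contributes weight two to the Fenchel--Nielsen count. Contracting a lone 2-loop therefore produces a stratum of codimension exactly two in $\Mod_{(g,h)(n,\bold m)}$, and transversely the moduli space near this stratum is modelled on a cone over the pair of Fenchel--Nielsen coordinates (length and twist) of the loop --- both must vanish simultaneously, and no admissible intermediate degeneration collapses only one of them. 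Consequently the pure 2-loop stratum is not contained in the closure of any codimension-one stratum of the compactification described in Section~\ref{s:exmps}. Since the face lattice of a convex polytope is graded and every codimension-two face is the intersection of at least two facets, the presence of a 2-loop forces failure of polytopality --- precisely the pathology already visible in Figures~\ref{f:m0112} and~\ref{f:m02011}.

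Given this obstruction, the body of the proof is a case analysis exhibiting a 2-loop on $(\Sur, \Mar)$ for every type outside the polytopal list. When $g \geq 1$ a meridian of any handle is non-separating and essential, hence a 2-loop. Assume then $g = 0$. When $h = 0$, the remaining cases have $n \geq 4$, and any simple closed curve partitioning the $n$ punctures into two subsets of size at least two is a 2-loop. When $h = 1$ (disk), the remaining cases have $n \geq 2$ with $(n,m) \neq (2,0)$, and a curve enclosing two punctures leaves at least one further feature on each side, so the curve is essential and not isotopic to the sole (now marked or carrying an extra puncture) boundary. When $h = 2$ (annulus), the remaining cases split into three families: both boundaries carry marked points (the core curve is then a 2-loop, being isotopic only to marked boundaries); one puncture together with a marked boundary (the curve separating that pair from the other boundary is isotopic to the marked boundary, hence a 2-loop); or $n \geq 2$ (a curve enclosing two punctures is essential and not isotopic to any single boundary). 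When $h \geq 3$, and $(g,h)(n,\bold m)$ is not $\Mod_{(0,3)(0,0)}$, a curve separating two of the boundary components (together with any adjacent punctures or marked points) from the remaining boundaries furnishes a 2-loop. In each subcase one checks that both sides of the cut surface contain enough features to be stable, so the exhibited contraction is a genuine boundary stratum of $\Mod_{(g,h)(n,\bold m)}$.

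The principal technical difficulty lies in Step~1: converting the existence of a 2-loop into a rigorous failure of polytopality. This requires a careful local analysis of the Fenchel--Nielsen compactification near a 2-loop degeneration --- the bubbling phenomenon of Figure~\ref{f:contract-loop}(b) --- to confirm that its two coordinates genuinely degenerate simultaneously, so that no intermediate codimension-one stratum lies between the 2-loop stratum and the top-dimensional open stratum in the closure poset. A secondary challenge is making the Step~2 case analysis exhaustive near the stability boundary, where the classification of essential simple closed curves on $(\Sur, \Mar)$ becomes tight, and one must verify in each case that the exhibited curve is genuinely essential and not isotopic to any unmarked boundary component of $\Sur$.
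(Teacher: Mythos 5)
Your argument is correct in outline but follows a genuinely different route from the paper. The paper's proof reduces everything to a few explicit low-dimensional counterexamples --- $\Mod_{(1,0)(1,0)}$ (a $2$-sphere stratified by a $2$-cell and a $0$-cell), $\Mod_{(0,1)(1,2)}$ and $\Mod_{(0,2)(0,\la 1,1\ra)}$ (the two topological disks of Figures~\ref{f:m0112} and~\ref{f:m02011}), together with $\CM{n}$ for $h=0$ --- and then invokes the principle that every face of a polytope is a polytope to propagate the failure, since these spaces appear as boundary strata of all the remaining cases. You instead work directly in the top-dimensional space, promoting the paper's closing Remark (that the excluded cases are exactly those admitting $2$-loops) to a proof: a singleton $2$-loop indexes a codimension-two stratum whose only ancestor in the closure poset is the open stratum, contradicting the fact that every codimension-two face of a polytope is contained in at least two facets. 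Your Step~1 is actually simpler than you make it sound: no local Fenchel--Nielsen cone analysis is needed, because the stratification is by construction indexed by compatible collections of arcs and loops ordered by inclusion, with codimension equal to total weight, so the poset obstruction is immediate. The trade-off is that you must exhibit a stable $2$-loop degeneration in every excluded type, whereas the paper only needs to verify a handful of low-dimensional examples.

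One concrete slip in your case analysis: for the annulus with one puncture, $m_1>0$ and $m_2=0$, the curve you describe --- separating the puncture and the marked boundary from the other boundary --- is isotopic to the \emph{unmarked} circle $B_2$, hence is a $1$-loop of weight one, not a $2$-loop. You want the other boundary-parallel curve, isotopic to the marked circle $B_1$: contracting it yields a disk with $m_1\geq 1$ boundary marked points and one interior node on one side, and a disk with two interior special points on the other, both stable. With that correction, and the routine stability checks you already flag (e.g.\ for $h\geq 3$ one must choose which two boundary components to separate so that the complementary piece remains stable), the argument goes through.
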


\begin{proof}
The overview of the proof is as follows:  We find certain moduli spaces which do not have polytopal structures, and then show these moduli spaces appearing as lower dimensional strata to the list above.   Since polytopes must have polytopal faces, this will show that none of the spaces on the list are polytopal.

Outside of the spaces in  Proposition~\ref{p:4assoc} and Theorem~\ref{t:polytope}, the cases of $\Mod_{(g,h)(n, \bold m)}$ which result in \emph{stable} spaces that we must consider are the following:
\begin{enumerate}
\item when $g > 0$;
\item when $h+n > 3$;
\item when $h+n=3$ and some $m_i > 0$;
\item when $h=2$, and both $m_1>0$ and $m_2>0$.
\end{enumerate}
We begin with item (1), when $g > 0$.  The simplest case to consider is that of a torus with one puncture.  (A torus with no punctures is not stable, with infinite automorphim group, and so cannot be considered.)  The moduli space $\Mod_{(1,0)(1,0)}$ of a torus with one puncture is two-dimensional (from Theorem~\ref{t:dim}) and has the stratification of a 2-sphere, constructed from a 2-cell and a 0-cell.  Thus, any other surface of non-zero genus, regardless of the number of marked points or boundary circles will always have $\Mod_{(1,0)(1,0)}$ appearing in its boundary strata; Figure~\ref{f:torusposet} shows such a case where this punctured torus appears in the boundary of $\Mod_{(2,3)(1,0)}$.   This immediately eliminates all $g>0$ cases from being polytopal.  For the remaining cases, we need only focus on the $g=0$ case.

\begin{figure}[h]
\includegraphics{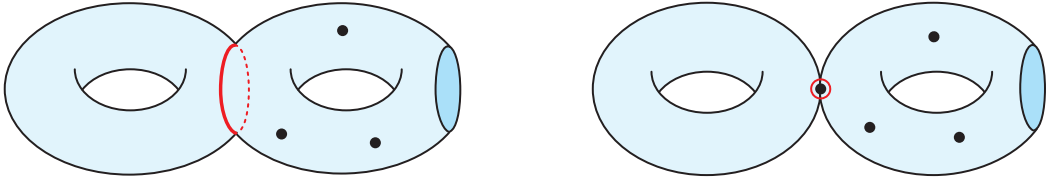}
\caption{The appearance of $\Mod_{(1,0)(1,0)}$ in boundary of higher genus moduli.}
\label{f:torusposet}
\end{figure}

Now consider item (2), when $h+n > 3$.  When $h=0$ and $n>3$, the resulting moduli space is exactly the classical compactified moduli space \CM{n} of punctured Riemann spheres.  The strata of this space is well-known, indexed by labeled trees, and is not a polytope.  For all other values of $h+n >3$, where $h>0$, the space $\Mod_{(0,1)(1,2)}$ appears as a boundary strata.  As we have shown in Figure~\ref{f:m0112}, this space is not a polytope.  Similary, for items (3) and (4), 
the moduli space $\Mod_{(0,2)(0,\la 1,1\ra)}$ appears in all strata, with Figure~\ref{f:m02011} showing the polytopal failure.  
\end{proof}

\begin{rem}
An alternate understanding of this result lies in 2-loops.  Since each 2-loop has weight two, Lemma~\ref{l:weight} and Theorem~\ref{t:dim} show that any such loop will increase the codimension  of the corresponding strata by \emph{two}.  This automatically disqualifies the stratification from being that of a polytope.  Indeed, the cases outlined in Theorem~\ref{t:notpolytope} are exactly those which allow 2-loops.
\end{rem}

\hide{
h+n = 3
h=0 n=3 (point)
h=1 n=2  m1=0 (interval)
h=2 n =1 m1=0 m2=0 (pentagon)
h=3 n=0 m1=m2=m3=0 (3d assoc)

h+n = 2
h=0 n =2 (not stable)
h=1 n=1  m1=POS (nD cyclo ***)
h=2 n=0  m1=POS m2=0 (nd cubea ***)
h=2 n=0  m1=0 m2=0 (not stable)

h+n =1
h=0 n=1 (not stable)
h=1 n=0 m1=POS (nD assoc ***)
}

\subsection{} \label{s:polyass}

We close this section with two well-known results.

\begin{prop} \cite[Section 2]{dev1} \label{p:assoc}
The space $\Mod_{(0,1)(0,m)}$ of $m$ marked points on the boundary of a disk is the associahedron $K_{m-1}$.
\end{prop}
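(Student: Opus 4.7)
The plan is to identify the stratification of $\Mod_{(0,1)(0,m)}$ combinatorially and match it with the face poset $A(m)$ defining the associahedron $K_{m-1}$. The first step is to observe that on a disk with $m$ boundary marked points (and no punctures), every decomposing curve must be an arc: a $1$-loop would require a boundary circle with no marked points (forbidden unless $m=0$), while a $2$-loop in a disk is necessarily null-homotopic or cuts off a region containing no marked points, so neither occurs in our stable setting. Hence every codimension~$k$ stratum is described by a set of $k$ pairwise compatible arcs, and Theorem~\ref{t:dim} gives $\dim \Mod_{(0,1)(0,m)} = m-3 = \dim K_{m-1}$.

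Next I would set up the bijection between isotopy classes of arcs on the disk and diagonals of a convex $m$-gon whose vertices are labeled by the boundary marked points (in cyclic order). The endpoints of an arc lie on two of the $m$ open boundary segments between consecutive marked points, and up to isotopy the arc is determined by this pair of segments. The triviality condition (an arc cannot be pushed arbitrarily close to a point or a marked point) forces each side of the arc to contain at least two marked points, which precisely excludes the choices corresponding to equal or adjacent segments. Thus isotopy classes of arcs biject with pairs of non-adjacent vertices of the $m$-gon, i.e.\ with its diagonals. A straightforward planar-topology check shows that two arcs admit disjoint isotopy representatives iff the corresponding diagonals do not cross. Consequently, collections of pairwise compatible arcs correspond to sets of mutually non-crossing diagonals, and this correspondence reverses inclusion in the expected way, giving a face-poset isomorphism between the stratification of $\Mod_{(0,1)(0,m)}$ and $A(m)$.

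Finally, one needs to promote this isomorphism of face posets to a convex polytopal identification with $K_{m-1}$. Since the moduli space is compact, Hausdorff, and a manifold with corners of the correct dimension (by Theorem~\ref{t:dim}), and since $K_{m-1}$ admits a convex polytopal realization (Haiman, Lee \cite{lee}) with face poset $A(m)$, the combinatorial isomorphism together with the inductive structure of the boundary strata (each codimension-one facet is itself a product of lower-dimensional associahedra, matching the recursive facet structure of $K_{m-1}$) yields the result; alternatively one cites \cite{dev1} directly, where the realization is carried out via an explicit embedding of this moduli space. The only technical point I expect to require care is the recursive identification of facets as products of lower associahedra, which follows from the pair-of-pants decomposition induced by each arc.
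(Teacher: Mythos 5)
Your proposal is correct and follows essentially the same route as the paper's sketch: dualize the $m$ marked points to a convex $m$-gon so that arcs correspond to diagonals and compatibility to noncrossingness, then match the resulting poset with $A(m)$. The only quibble is a labeling inconsistency --- you declare the polygon's \emph{vertices} to be the marked points but then identify arcs with pairs of vertices via the boundary \emph{segments} between marked points, which is really the paper's dual convention (marked points $\leftrightarrow$ edges, gaps $\leftrightarrow$ vertices); your added care in ruling out loops and in checking dimensions is a welcome supplement to the paper's terser sketch.
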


\begin{proof}[Sketch of Proof]
Construct a dual between $m$ marked points on the boundary of a disk and an $m$-gon by identifying each marked point to an edge of the polygon, in cyclic order.   Then each arc on the disk corresponds to a diagonal of the polygon, and since arcs are compatible, the diagonals are noncrossing.  
Figure~\ref{f:bijection-k4} shows an example for the 2D case.
\end{proof}

\begin{figure}[h]
\includegraphics{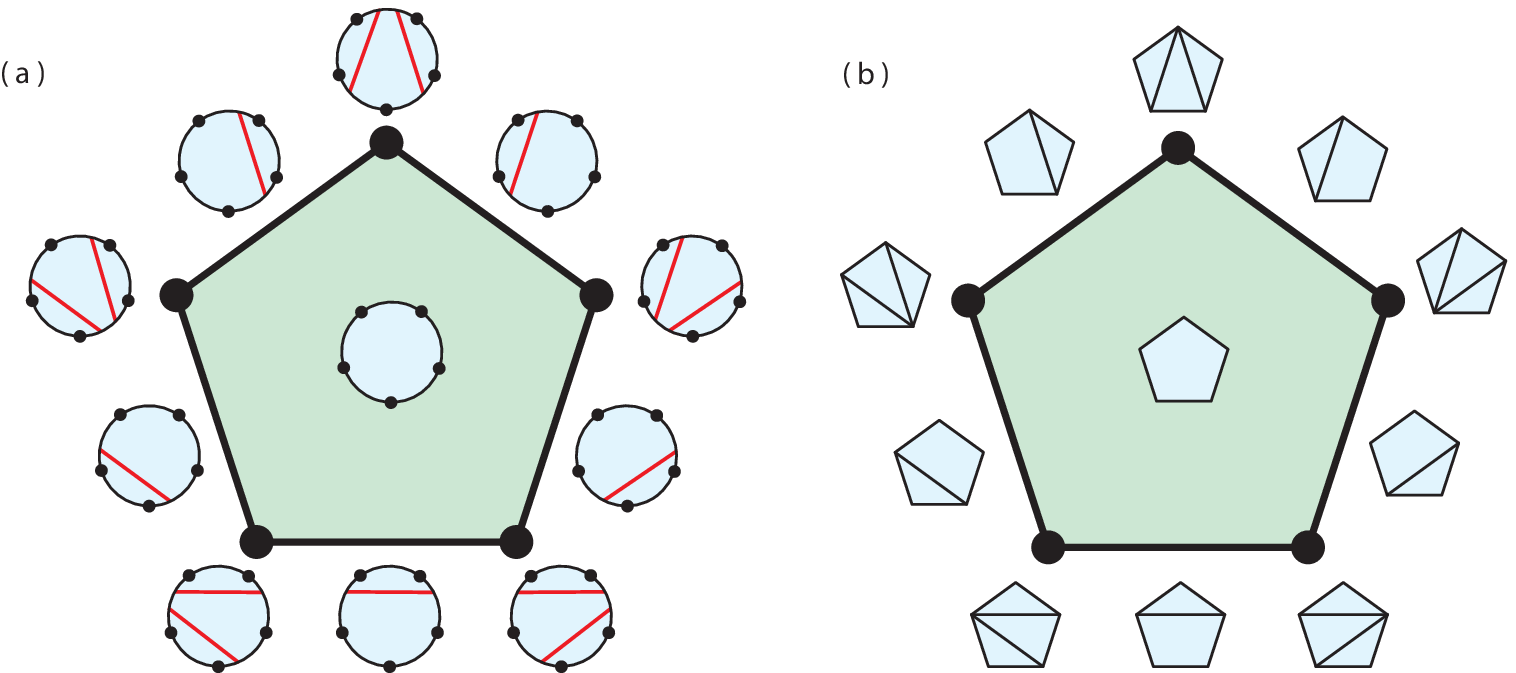}
\caption{The bijection between $\Mod_{(0,1)(0,5)}$ and $K_4$.}
\label{f:bijection-k4}
\end{figure}

\begin{rem}
The appearance of the associahedron as the moduli space $\Mod_{(0,1)(0,m)}$ is quite natural.  Indeed, if we considered all ordered labelings of the $m$ marked points on the boundary of the disk, one would obtain \M{m}, the \emph{real} points of the Deligne-Mumford space \CM{m} tiled by $(m-1)!/2$ associahedra \cite{dev1}.  \end{rem}

Figure~\ref{f:holomorphic} shows (a) the complex double $\Sur_\C$ with $m$ marked points on the boundary fixed by involution.  Part (b) shows the surface $\Sur$ with boundary, whereas (c) considers this space with the natural hyperbolic structure, where the marked points become cusps in the plane.

\begin{figure}[h]
\includegraphics{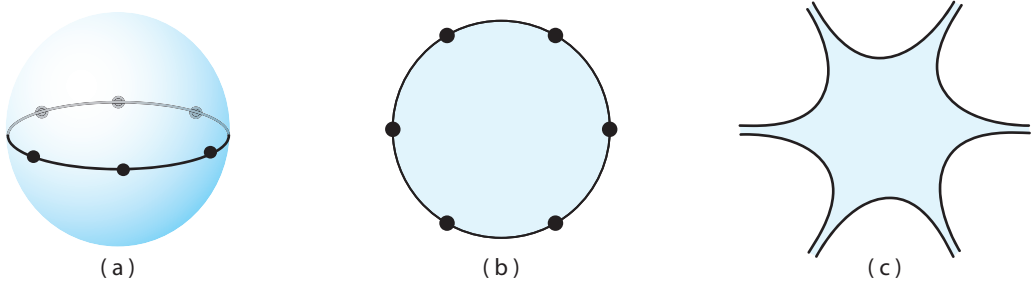}
\caption{(a) The complex double, (b) the surface with boundary, and (c) its hyperbolic structure with planar cusps.}
\label{f:holomorphic}
\end{figure}

\subsection{} \label{s:polycyc}

A close kin to the associahedron is the \emph{cyclohedron}. This polytope originally manifested in the work of Bott and Taubes \cite{bt} with respect to knot invariants and later given its name by Stasheff. 

\begin{defn}
Let $B(n)$ be the poset of all diagonalizations of a convex $2n$-sided centrally symmetric polygon, ordered such that $a \prec a'$ if $a$ is obtained from $a'$ by adding new noncrossing diagonals. Here, a diagonal will either mean  a {\em pair} of centrally symmetric diagonals or a \emph{diameter} of the polygon. The \emph{cyclohedron} $W_n$ is a convex polytope of dimension $n-1$ whose face poset is isomorphic to $B(n)$.
\end{defn}

\begin{prop} \cite[Section 1]{dev2}  \label{p:cyclo}
The space $\Mod_{(0,1)(1,m)}$ of $m$ marked points on the boundary of a disk with a puncture is the cyclohedron $W_m$.
\end{prop}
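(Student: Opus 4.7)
The plan is to mirror the proof of Proposition~\ref{p:assoc} by establishing a duality between the marked bordered surface and a centrally symmetric $2m$-gon, so that the face poset of the moduli space matches $B(m)$.

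For the bijection, I would pass to the double cover of the disk branched at the puncture. This cover is again a disk, carrying $2m$ boundary marked points (the two preimages of each of the $m$ original marked points) and a deck transformation that acts as rotation by $\pi$, fixing only the branch point. By Proposition~\ref{p:assoc}, the covering disk is dual to a $2m$-gon whose edges correspond to the lifted marked points, and under this duality the deck transformation becomes precisely the central antipodal symmetry on the polygon.

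To transfer the stratification, I would classify arcs on the base disk by how they lift. An arc with endpoints in two distinct boundary gaps lifts to a pair of disjoint arcs exchanged by the deck transformation; these correspond to a centrally symmetric pair of diagonals in the $2m$-gon. An arc whose two endpoints lie in the same gap and whose enclosed region contains the puncture is isotopic to a loop around the puncture; its lift is a single arc through the center point, isotopic to a diameter of the $2m$-gon. Compatibility of arcs translates into non-crossing of the corresponding diagonals, so this gives a poset isomorphism between compatible arc collections on the disk with puncture and non-crossing centrally symmetric generalized diagonal collections.

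The main obstacle, I expect, is verifying that the validity conditions on both sides match exactly. On the disk, one must only consider arcs whose pinching leaves every resulting bubble stable; on the polygon, one must only count non-trivial generalized diagonals (pairs of diagonals that do not themselves cross and that cut off at least two polygon edges on either outer side). The check that these two conditions agree, and that no spurious strata are introduced, is the critical step. Once this dictionary is established, Theorem~\ref{t:dim} verifies $\dim \Mod_{(0,1)(1,m)} = m - 1 = \dim W_m$, and the poset isomorphism upgrades to the claimed identification of polytopes.
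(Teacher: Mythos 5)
Your proposal arrives at exactly the bijection the paper uses --- marked points correspond to antipodal edge-pairs of a centrally symmetric $2m$-gon, generic arcs to centrally symmetric pairs of diagonals, and puncture-separating arcs to diameters --- but you reach it by a genuinely different route. The paper's sketch (following \cite{dev2}) simply \emph{declares} the dual polygon and the three-way dictionary; you \emph{derive} it by passing to the double cover of the disk branched at the puncture, identifying the covering disk with the $2m$-gon via Proposition~\ref{p:assoc}, and reading off the central symmetry as the deck transformation. This buys a conceptual explanation for why the symmetric $2m$-gon appears at all, and it makes the compatibility/noncrossing translation and the stability check ($m-j\geq 2$ marked points on the bubbled side versus a diagonal cutting off at least two edges) essentially automatic, whereas the paper leaves these implicit. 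The dimension count via Theorem~\ref{t:dim} matches, so the argument is sound.

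One step is stated incorrectly, though the conclusion it feeds is right. The preimage of \emph{any} embedded arc avoiding the branch point under a branched double cover is a trivial double cover of an interval, hence always \emph{two} disjoint arcs exchanged by the deck transformation --- never a single arc through the center. For an arc separating the puncture from all $m$ marked points, the two lifts are parallel copies of the same diameter, bounding between them the (connected) lift of the puncture-containing region; it is because this symmetric pair degenerates to a single isotopy class of diagonal that such arcs correspond to \emph{one} diameter (and hence contribute codimension one, matching the weight-one count of Lemma~\ref{l:weight}). You should replace ``its lift is a single arc through the center point'' with this statement; as written, that sentence is false, even though the resulting dictionary and the poset isomorphism with $C(m)$'s cousin $B(m)$ are correct.
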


\begin{proof}[Sketch of Proof]
Construct a dual to the $m$ marked points on the boundary of the disk to the symmetric $2m$-gon by identifying each marked point to a pair of antipodal edges of the polygon, in cyclic order.   Then each arc on the disk corresponds to a pair of symmetric diagonals of the polygon; in particular, the arcs which partition the puncture from the boundary points map to the diameters of the polygon.  Since arcs are compatible, the diagonals are noncrossing.  Figure~\ref{f:bijection-w3} shows an example for the 2D case.
\end{proof}

\begin{figure}[h]
\includegraphics {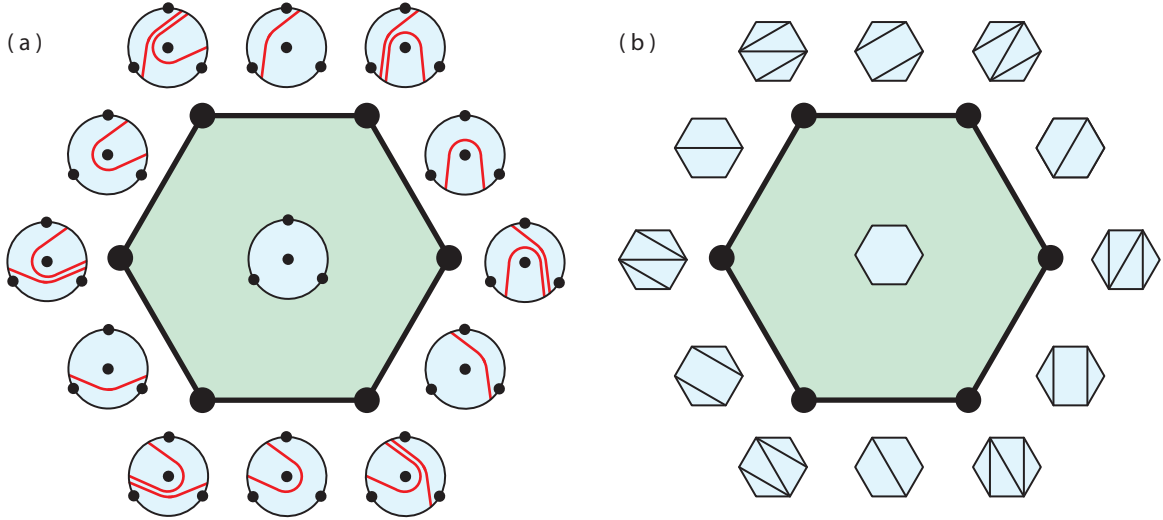}
\caption{Cyclohedron $W_3$ using brackets and polygons.}
\label{f:bijection-w3}
\end{figure}

\begin{rem}

For associahedra, the moduli space of $m$ marked points on the boundary of a disk, any arc must have at least two marked points on either side of the partition in order to maintain stability.   For the cyclohedra, however, the puncture in the disk allows us to bypass this condition since a punctured disk with one marked point on the boundary is stable.
From the perspective of polytopes, this translates into different stratification of the faces:  Whereas each face of $K_n$ is a product of associahedra, resulting in an operad structure, faces of $W_n$ are products of associahedra and cyclohedra, yielding a module over an operad. 
\end{rem}

%
%
\section{Graph Associahedra and Truncations of Cubes}  
\label{s:cubes}
\subsection{}

This section focuses on understanding the polytope arising from the moduli space of marked points on an annulus, as shown in Figure~\ref{f:polytope-moduli}(c).  However, we wish to place this polytope in a much larger context based on truncations of simplices and cubes.  We begin with motivating definitions of \emph{graph associahedra}; the reader is encouraged to see \cite[Section 1]{cd} for details.

\begin{defn}
Let $G$ be a connected graph.  A \emph{tube} is a set of nodes of $G$ whose induced graph is a connected 
proper subgraph of $G$.  Two tubes $u_1$ and $u_2$ may interact on the graph as follows:
\begin{enumerate}
\item Tubes are \emph{nested} if  $u_1 \subset u_2$.
\item Tubes \emph{intersect} if $u_1 \cap u_2 \neq \emptyset$ and $u_1 \not\subset u_2$ and $u_2 \not\subset u_1$.
\item Tubes are \emph{adjacent} if $u_1 \cap u_2 = \emptyset$ and $u_1 \cup u_2$ is a tube in $G$.
\end{enumerate}
Tubes are \emph{compatible} if they do not intersect and are not adjacent.  A \emph{tubing} $U$ of $G$ is a set of tubes of $G$ such that every pair of tubes in $U$ is compatible.
\end{defn}

\begin{thm} \cite[Section 3]{cd} \label{t:graph} \label{t:graphass}
For a graph $G$ with $n$ nodes, the \emph{graph associahedron} $\KG$ is a simple convex polytope of dimension $n-1$ whose face poset is isomorphic to the set of tubings of $G$, ordered such that $U \prec U'$ if $U$ is obtained from $U'$ by adding tubes.
\end{thm}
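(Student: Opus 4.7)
The plan is to realize $\KG$ by iterative truncation of the $(n-1)$-simplex and then match its face lattice to the poset of tubings. Label the $n$ facets of $\Delta^{n-1}$ by the nodes of $G$, so that for any subset $S$ of nodes the intersection $F_S := \bigcap_{i \in S} F_i$ is a face of $\Delta^{n-1}$ of codimension $|S|$. For each proper tube $u$ of $G$ with $|u| \ge 2$, the face $F_u$ has dimension $n-1-|u|$, and I would truncate these faces in order of increasing dimension (equivalently, decreasing $|u|$), introducing at each step a new facet that we label by $u$. Singleton tubes $\{i\}$ correspond to the original facets of the simplex and are left untouched.

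The first technical step is to show that this procedure is well-defined and produces a simple $(n-1)$-dimensional convex polytope. Within a single dimensional stage, two distinct tubes $u,u'$ of equal size yield distinct faces $F_u \ne F_{u'}$, and their truncating hyperplanes can be chosen close enough to the respective faces that they do not interfere with one another. Truncations at later stages operate only inside larger faces whose labels have already been recorded, and processing larger tubes first ensures that $F_u$ still occurs as a face of the current polytope at the moment we cut it. Since truncation of a face in a simple polytope preserves simplicity, we end up with a simple convex polytope of dimension $n-1$ whose facets are in bijection with the proper tubes of $G$.

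Next I would extend the facet bijection to arbitrary tubings via $U \mapsto \bigcap_{u \in U} F_u$, where $F_u$ is an original facet when $u$ is a singleton and the new truncation facet otherwise. The central claim is that two facets $F_u$ and $F_{u'}$ have nonempty intersection in $\KG$ exactly when the tubes $u$ and $u'$ are compatible. For nested $u \subset u'$, truncating $F_{u'}$ before $F_u$ produces two truncation facets that share a codimension-two face; for disjoint non-adjacent tubes, the faces $F_u$ and $F_{u'}$ already share such a face in $\Delta^{n-1}$ and the truncations do not destroy it. For intersecting tubes, the relevant incidence face lies inside a larger labeled face that has already been truncated away; for adjacent tubes with $u \cup u' = w$ a tube, the truncation along $F_w$, carried out earlier since $|w|>|u|,|u'|$, separates $F_u$ from $F_{u'}$. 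Iterating this dictionary, a compatible collection $U$ yields a nonempty face of codimension $|U|$, and the refinement order on tubings matches the face-inclusion order on $\KG$ in the desired way.

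The main obstacle will be the bookkeeping in the third paragraph, verifying that incompatibility (adjacency or proper intersection) always translates into disjointness of the associated facets after truncation. Getting this right is what simultaneously delivers simplicity of $\KG$ — every vertex must lie on exactly $n-1$ facets, matching the cardinality of a maximal tubing — and pins down the full face poset. A clean way to organize the argument is by induction on the number of tubes of size $\ge 2$: the base case is $\Delta^{n-1}$ itself, corresponding to a graph with no edges, and each truncation step is then shown to preserve both simplicity and the tubing-to-face correspondence, with the adjacency and intersection cases being exactly the ones that would otherwise spoil simplicity and therefore have to be handled by a truncation.
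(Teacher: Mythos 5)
Your overall strategy is the right one: the paper itself offers no proof of this theorem (it is quoted from \cite{cd}), but the route you take --- realize $\KG$ by truncating the faces of $\triangle_G$ labeled by tubes in increasing order of dimension, then match the resulting face lattice with tubings --- is precisely the construction the paper records separately as Theorem~\ref{t:graphasstrunc} and is how the cited source argues. Your handling of the nested, intersecting, and adjacent cases is correct: in each of the latter two, the face $F_{u\cup u'}$ is a labeled face of strictly larger tube size, hence is truncated at an earlier stage, which separates the two facets.

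There is, however, a genuine gap in your ``disjoint and non-adjacent'' case, and it is exactly where the definitions as literally stated bite back. Suppose $u$ and $u'$ are disjoint tubes with $u\cup u'=V(G)$; since $G$ is connected this union induces a connected but \emph{improper} subgraph, so $u\cup u'$ is not a tube, so by the stated definition $u$ and $u'$ are not adjacent and hence compatible. Yet $F_u\cap F_{u'}=F_{u\cup u'}$ has dimension $n-1-|u\cup u'|=-1$: the intersection is already empty in $\triangle_G$, contradicting your claim that disjoint non-adjacent tubes ``already share such a face.'' Concretely, for the path on two nodes the two singletons form a ``compatible'' pair but $K_3$ is an interval with no codimension-two face; for the three-node path $a$--$b$--$c$, the pair $\{c\},\{a,b\}$ is ``compatible'' but the corresponding facets of the pentagon $K_4$ are disjoint, and one even gets spurious rank-three ``tubings'' such as $\{\{a\},\{c\},\{a,b\}\}$ in a two-dimensional polytope. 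The repair (standard in the later literature) is to declare disjoint tubes adjacent whenever their union induces a connected subgraph, proper or not --- equivalently, to require that the tubes of a tubing never cover all of $V(G)$. With that correction your dictionary is consistent, and the remaining work is to upgrade the pairwise intersection statement to the multi-way one (a compatible $k$-element tubing meets in a nonempty face of codimension exactly $k$), which the induction on truncations you propose in your last paragraph is the right vehicle for; simplicity then falls out since maximal tubings have exactly $n-1$ tubes.
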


\begin{cor} \label{c:asscyc}
When $G$ is a path with $n$ nodes, $\KG$ becomes the associahedron $K_{n+1}$.  Similarly, when $G$ is a cycle with $n$ nodes, $\KG$ is the cyclohedron $W_n$. 
\end{cor}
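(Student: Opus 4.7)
The strategy is to exhibit explicit bijections between the tubings of the path $P_n$ (resp.\ the cycle $C_n$) and the diagonalizations parametrizing $A(n+2)$ (resp.\ $B(n)$), and then verify that these bijections preserve the poset structure, so that adding a new compatible tube corresponds to adding a new noncrossing diagonal. Combined with Theorem~\ref{t:graphass} and the definitions of $K_{n+1}$ and $W_n$ from Sections~\ref{s:polyass} and~\ref{s:polycyc}, this will yield the corollary.

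For the path case, label the nodes of $P_n$ as $1, 2, \ldots, n$ consecutively, and the vertices of a convex $(n+2)$-gon as $v_0, v_1, \ldots, v_{n+1}$. The proper connected subgraphs of $P_n$ are exactly the intervals $\{i, i+1, \ldots, j\}$ with $1 \leq i \leq j \leq n$ and $(i,j) \neq (1,n)$, and I would send such a tube to the diagonal $\{v_{i-1}, v_{j+1}\}$. A count verifies that both sides have $(n+2)(n-1)/2$ elements, and the properness exclusion $(i,j) \neq (1,n)$ matches the exclusion of the polygon edge $\{v_0, v_{n+1}\}$ on the diagonal side. The main verification is that two tubes $[i_1, j_1]$ and $[i_2, j_2]$ (with $i_1 \leq i_2$) are compatible precisely when the corresponding diagonals do not cross: a case analysis on the four possibilities --- nested, intersecting, adjacent, and disjoint non-adjacent --- shows that compatibility corresponds exactly to the endpoints $v_{i_1 - 1}, v_{i_2 - 1}, v_{j_1 + 1}, v_{j_2 + 1}$ being non-alternating around the polygon. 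This yields the poset isomorphism $\mathcal{K} P_n \cong K_{n+1}$.

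For the cycle case, label the nodes of $C_n$ as $1, \ldots, n$ cyclically and the vertices of the centrally symmetric $2n$-gon as $v_0, \ldots, v_{2n-1}$. Identify node $i$ with the antipodal pair of vertices $\{v_i, v_{i+n}\}$. A proper tube of $C_n$ is a cyclic interval of length $k$ with $1 \leq k \leq n-1$, corresponding to a centrally symmetric pair of vertex-arcs in the $2n$-gon. I would send such a tube to the pair of antipodal diagonals in the $2n$-gon that separate these arcs from their complements; these two diagonals coincide as a diameter precisely in the maximal case $k = n-1$, accounting for the $n$ diameters appearing in $B(n)$. Counting gives $n(n-1)$ elements on each side, and the bijection is checked to be a poset isomorphism by a case analysis analogous to the path case, now performed modulo the $\mathbb{Z}/2n$-rotational symmetry.

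The main obstacle will be the cycle case, where the central symmetry must be handled carefully. The cyclic structure introduces configurations with no analogue in the path case --- notably diameters arising from near-maximal tubes, and pairs of tubes whose union covers all of $C_n$ --- and the crossing analysis for pairs of antipodal diagonals must be carried out with an eye to these degenerations, ensuring that the symmetric pair for one tube never crosses the symmetric pair (or diameter) for a compatible tube. Once the case analysis is organized cleanly around the relative position of the arcs on the $2n$-gon, the isomorphism $\mathcal{K} C_n \cong W_n$ follows by the same template as in the path case.
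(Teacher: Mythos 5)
Your proposal is correct and is essentially the argument the paper relies on: the corollary is stated without proof as a citation to Carr--Devadoss, and the tube-to-diagonal dictionaries you build (the interval $\{i,\ldots,j\}$ of the path mapping to the diagonal $\{v_{i-1},v_{j+1}\}$ of the $(n+2)$-gon, and a cyclic interval mapping to a centrally symmetric pair that degenerates to a diameter exactly at size $n-1$) are precisely the correspondences the paper illustrates in Figure~\ref{f:kwexmp} alongside Figures~\ref{f:bijection-k4} and~\ref{f:bijection-w3}. Your counts and your crossing-versus-compatibility case analysis check out.

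One concrete point about the edge case you flag but leave unresolved: for two disjoint tubes of $C_n$ whose union is the entire cycle (e.g.\ $\{1\}$ and $\{2,3\}$ in $C_3$), your image diagonals do cross --- a symmetric pair around $v_1,v_4$ crosses the diameter $\{v_1,v_4\}$ --- so the bijection is a poset isomorphism only if such tubes are declared \emph{incompatible}. Under the paper's literal wording (adjacent means the union ``is a tube,'' hence a \emph{proper} connected subgraph), these two tubes would count as compatible and your map would break; the convention actually needed, and the one used in Carr--Devadoss, is that adjacency only requires the union to induce a connected subgraph, properness not demanded. You should state explicitly that you are using that convention; with it, your cycle-case analysis goes through, and a quick sanity check on $C_3$ (six compatible pairs of tubes, matching the six vertices of the hexagon $W_3$) confirms it.
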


Figure~\ref{f:kwexmp} shows the 2D examples of these cases of graph associahedra, having underlying graphs as paths and cycles, respectively, with three nodes. Compare with Figures~\ref{f:bijection-k4} and~\ref{f:bijection-w3}.

\begin{figure}[h]
\includegraphics{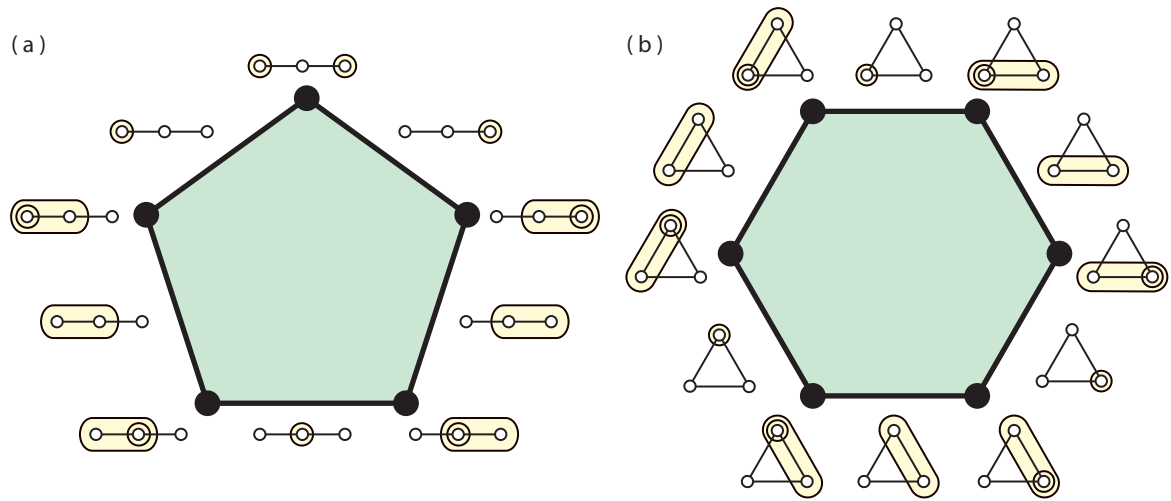}
\caption{Graph associahedra of the (a) path and (b) cycle with three nodes as underlying graphs.}
\label{f:kwexmp}
\end{figure}

There exists a natural construction of graph associahedra from iterated truncations of the simplex:  For a graph $G$ with $n$ nodes, let $\triangle_G$ be the $(n{-}1)$-simplex in which each facet (codimension one face) corresponds to a particular node.  Each proper subset of nodes of $G$ corresponds to a unique face of $\triangle_G$, defined by the intersection of the faces associated to those nodes, and the empty set corresponds to the face which is the entire polytope $\triangle_G$.

\begin{thm} \cite[Section 2]{cd} \label{t:graphasstrunc}
For a graph $G$, truncating faces of $\triangle_G$ which correspond to tubes in increasing order of dimension results in the graph associahedron $\KG$.
\end{thm}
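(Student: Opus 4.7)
The plan is to identify the face poset of the polytope $P_G$ produced by the truncation sequence with the poset of tubings of $G$; by Theorem~\ref{t:graphass}, this forces $P_G$ to be combinatorially equivalent to $\KG$. I first set up a consistent labeling: faces of $\triangle_G$ correspond to proper subsets $S \subsetneq V(G)$ via $F_S = \bigcap_{v \in S} F_{\{v\}}$, so the original facets $F_{\{v\}}$ are already indexed by the singleton tubes. Each truncation of a face $F_T$ for a tube $T$ of size at least two introduces exactly one new facet, which I label $F_T$. After the full truncation sequence, the facets of $P_G$ are in bijection with the tubes of $G$.

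The central combinatorial claim is that a collection $F_{T_1}, \dots, F_{T_k}$ of facets in $P_G$ has nonempty intersection if and only if $\{T_1, \dots, T_k\}$ is a tubing. For the ``only if'' direction, I examine the two kinds of incompatibility. If $T_1$ and $T_2$ are adjacent, then $T_1 \cup T_2$ is a tube of strictly larger size, hence truncated earlier in the sequence; since $F_{T_1} \cap F_{T_2} = F_{T_1 \cup T_2}$ inside $\triangle_G$, that earlier truncation cleanly separates the two facets. If $T_1, T_2$ intersect as tubes (overlap without nesting), then $T_1 \cup T_2$ is connected, so either it is a proper tube, handled exactly as above, or $T_1 \cup T_2 = V(G)$, in which case $F_{T_1} \cap F_{T_2}$ was already empty in $\triangle_G$. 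For the ``if'' direction, when the $T_i$ form a tubing I check that no truncation destroys the shared face: for nested $T_i \subset T_j$, the new facet $F_{T_i}$ (arising from the truncation of $F_{T_i}$) genuinely meets the facet $F_{T_j}$; for disjoint non-adjacent $T_i, T_j$, the face $F_{T_i \cup T_j}$ is never truncated because $T_i \cup T_j$ fails to be a tube.

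The main obstacle is verifying that each step of the truncation sequence is well-defined and preserves simplicity. Concretely, when it is time to truncate $F_T$, one must confirm that its current incarnation in the intermediate polytope is still a simple face whose truncation produces exactly one new facet of the expected combinatorial type. I would argue this inductively: because truncations occur in increasing dimension, $F_T$ has been modified only by earlier truncations of faces $F_{T'}$ for tubes $T' \supsetneq T$ contained in $F_T$, which round off its boundary in a controlled way dictated by the induced subgraph $G[T]$. Once this local picture is secured, the passage from pairwise to higher-order intersections follows from the simplicity of each intermediate polytope, so the face poset of $P_G$ matches the tubing poset of $G$, yielding the desired combinatorial equivalence with $\KG$.
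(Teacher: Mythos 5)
This theorem is quoted from Carr--Devadoss \cite[Section~2]{cd}; the paper you are reading gives no proof of it, so there is no internal argument to compare against. Your outline does follow the strategy of the cited source: label the facets of the truncated polytope by tubes, characterize which collections of facets meet, and invoke Theorem~\ref{t:graphass} to conclude. The ``only if'' half of your central claim is essentially right, including the observation that an incompatible pair $T_1, T_2$ forces $T_1\cup T_2$ to be either a larger tube (truncated earlier, since larger tubes index lower-dimensional faces) or all of $V(G)$ (so $F_{T_1}\cap F_{T_2}$ was already empty in $\triangle_G$).

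There is, however, a genuine gap in your last step. You claim that ``the passage from pairwise to higher-order intersections follows from the simplicity of each intermediate polytope.'' That principle is false: in a triangular prism, which is simple, the three rectangular facets pairwise intersect but have empty common intersection. So establishing which \emph{pairs} of facets of $P_G$ meet does not determine the face poset, even granting simplicity. It is true for the graph associahedron that pairwise compatibility of tubes implies a common face (tubings are defined by pairwise compatibility), but that is part of what must be proved, not something you may import. The fix --- and what Carr--Devadoss actually do --- is to run the induction so that after each truncation you control the \emph{entire} face poset of the intermediate polytope (via a general lemma describing how truncating a single face of a simple polytope transforms its face lattice), rather than only the pairwise incidence of facets. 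Your ``main obstacle'' paragraph points in this direction but stops at a sketch; as written, the argument does not close. Relatedly, your ``if'' direction checks only the two-tube configurations (nested, and disjoint non-adjacent), whereas the claim you need is that for an arbitrary tubing $\{T_1,\dots,T_k\}$ the corresponding $k$ facets share a face of codimension exactly $k$; that requires the full inductive bookkeeping, not a pairwise check.
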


\subsection{}

We now create a new class of polytope that mirror graph associahedra, except now we are interested in truncations of cubes.  We begin with the notion of design tubes.

\begin{defn}
Let $G$ be a connected graph.  A \emph{round tube} is a set of nodes of $G$ whose induced graph is a connected 
(and not necessarily proper) subgraph of $G$.  A \emph{square tube} is a single node of $G$.  Such tubes are called \emph{design tubes} of $G$.  Two design tubes are \emph{compatible} if 
\begin{enumerate}
\item when they are both round, they are not adjacent and do not intersect.
\item otherwise, they are not nested.
\end{enumerate}
A \emph{design tubing} $U$ of $G$ is a collection of design tubes of $G$ such that every pair of tubes in $U$ is compatible.  
\end{defn}

Figure~\ref{f:designtubes} shows examples of design tubings.  Note that unlike ordinary tubes, round tubes do not have to be proper subgraphs of $G$.  Based on design tubings, we construct a polytope, not from truncations of simplices but cubes:
\begin{figure}[h]
\includegraphics{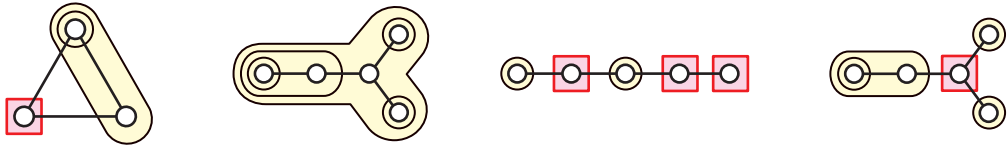}
\caption{Design tubings.}
\label{f:designtubes}
\end{figure}
For a graph $G$ with $n$ nodes, we define $\square_G$ to be the $n$-cube 
where each pair of opposite facets correspond to a particular node of $G$.  Specifically, one facet in the pair
represents that node as a round tube and the other represents it as a square tube.  Each subset of nodes of $G$, chosen to be either round or square, corresponds to a unique face of $\square_G$, defined by the intersection of the faces associated to those nodes.  The empty set corresponds to the face which is the entire polytope $\square_G$.

\begin{defn}
For a graph $G$, truncate faces of $\square_G$ which correspond to round tubes in increasing order of dimension. The resulting polytope $\CG$ is the \emph{graph cubeahedron}.
\end{defn}

\begin{exmp}
Figure \ref{f:cube-y3} displays the construction of $\CG$ when $G$ is a cycle with three nodes.  The facets of the $3$-cube are labeled with nodes of $G$, each pair of opposite facets being round or square.   The first step is the truncation of the corner vertex labeled with the round tube being the entire graph.  Then the three edges labeled by tubes are truncated.
\end{exmp}

\begin{figure}[h]
\includegraphics{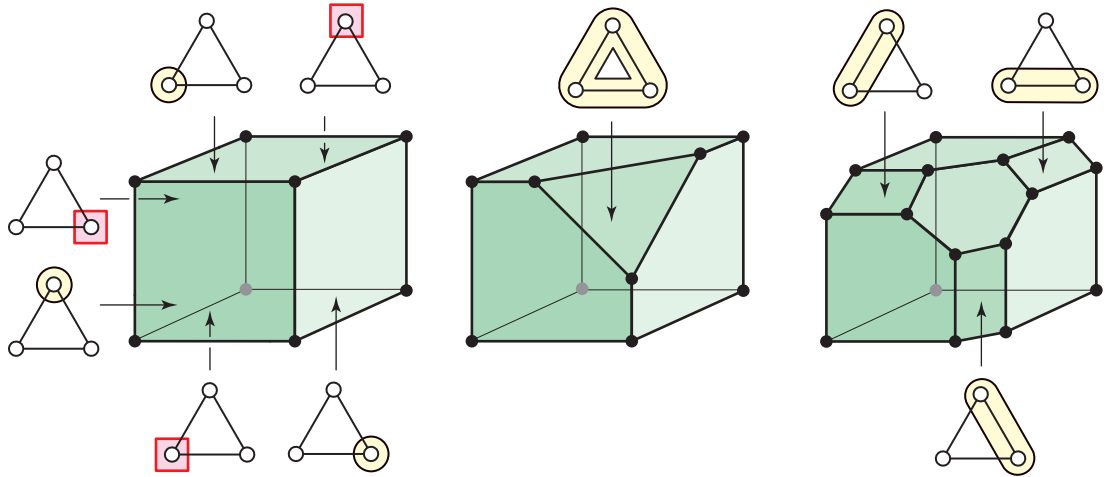}
\caption{Iterated truncations of the $3$-cube based on a 3-cycle.}
\label{f:cube-y3}
\end{figure}

\begin{thm} \label{t:graphcube}
For a graph $G$ with $n$ nodes, the graph cubeahedron $\CG$ is a simple convex polytope of dimension $n$ whose face poset is isomorphic to the set of design tubings of $G$, ordered such that $U \prec U'$ if $U$ is obtained from $U'$ by adding tubes.
\end{thm}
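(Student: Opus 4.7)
My plan is to mimic the proof of Theorem \ref{t:graphasstrunc} from \cite{cd}, with the necessary modifications to accommodate the distinction between round and square tubes and the fact that we truncate a cube rather than a simplex. The starting observation is that faces of $\square_G$ are canonically indexed by partial colorings $V(G) \to \{\emptyset,\mathrm{round},\mathrm{square}\}$, since opposite facets of a cube are disjoint; under this dictionary a round tube $T$ corresponds to the codimension-$|T|$ face $F_T$ obtained by coloring every node of $T$ round. For a single-node round tube this is already a facet of $\square_G$, while for $|T|\ge 2$ it is a proper face that will become a new facet after truncation.

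The next step is to verify that the iterated truncation is well-defined and to identify the labels of the resulting facets. Truncating the faces $F_T$ in increasing order of dimension ensures that each $F_T$ persists as a face of the current polytope at the moment it is truncated, because later truncations remove only faces of strictly smaller dimension. Invoking the general truncation lemma of \cite[Section 2]{cd}, one obtains at each stage a simple convex polytope, so the final polytope $\CG$ is a simple $n$-dimensional convex polytope. Each truncation of $F_T$ produces a new facet which I label $R_T$, and the surviving facets of $\square_G$ are precisely the single-node round facets $R_v$ (not truncated, as they are already facets) and the single-node square facets $S_v$. Thus facets of $\CG$ correspond bijectively to round tubes of $G$ of any size together with single-node square tubes of $G$, which is exactly the set of design tubes.

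Since $\CG$ is simple, every face is determined by the set of facets containing it, so it remains to check that a collection of facets has nonempty intersection in $\CG$ exactly when the corresponding labels form a compatible design tubing. I would proceed case-by-case through the pairwise compatibility conditions: two square facets $S_v,S_w$ with $v\neq w$ meet as non-opposite facets of $\square_G$ and continue to meet through all truncations; opposite facets $R_v$ and $S_v$ never meet, matching the nestedness incompatibility for the single node $v$; a round facet $R_T$ and a square facet $S_v$ meet exactly when $v\notin T$, because $S_v$ is opposite to $R_v$ so $F_T\cap S_v=\emptyset$ whenever $v\in T$, while for $v\notin T$ the intersection $F_T\cap S_v$ is nonempty and becomes a face of $R_T\cap S_v$ after truncation; and two round facets $R_T,R_{T'}$ meet in $\CG$ if and only if $T$ and $T'$ are nested or are disjoint and non-adjacent, which is precisely the round-round compatibility. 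Higher-order intersections then reduce to pairwise compatibility by standard properties of simple polytopes.

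The main obstacle is the round-round case, specifically the claim that nested tubes $T \subset T'$ produce adjacent facets $R_T,R_{T'}$ while adjacent or properly intersecting round tubes produce non-adjacent facets. The argument requires carefully tracking how the earlier truncation of the smaller face $F_{T'}$ (in the nested case) or of $F_{T\cup T'}$ (in the adjacent or intersecting case, where $T\cup T'$ is itself a round tube and so is truncated first) reshapes the boundary of the larger face $F_T$ before $F_T$ is itself truncated. I would carry this out by local inspection in a neighborhood of $F_{T\cup T'}$, reducing to a model computation inside a cube of dimension $|T\cup T'|$; this mirrors the analogous step in \cite[Section 2]{cd} for graph associahedra.
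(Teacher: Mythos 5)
Your strategy---identify the facets of $\CG$ after truncation and recover the face poset from facet intersections---is genuinely different from the paper's, and it has a gap at its final step. The assertion that ``higher-order intersections reduce to pairwise compatibility by standard properties of simple polytopes'' is false: the three edges of a triangle pairwise intersect yet have empty common intersection, and the triangle is simple. Simplicity gives you that a codimension-$k$ face lies on exactly $k$ facets, so faces inject into sets of facets, but it does not give the converse; you must separately prove that \emph{every} design tubing $U$ (not merely every compatible pair of design tubes) is realized by a nonempty face of codimension $|U|$. That flagness statement is the substantive half of the theorem and cannot be waved through. Compounding this, the round--round pairwise case, which you correctly identify as the main obstacle, is left as a plan (``local inspection near $F_{T\cup T'}$'') rather than carried out, so the essential truncation bookkeeping is still missing from the writeup.

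The paper sidesteps both issues by a reduction you may prefer: the very first truncation, at the vertex of $\square_G$ where all $n$ round facets meet (the face labeled by the full round tube $G$), creates an $(n-1)$-simplex whose faces inherit the round-tube labels of the cube, i.e.\ a copy of $\triangle_G$; the remaining round-tube truncations, taken in increasing order of dimension, restrict to this simplex as exactly the truncations of Theorem~\ref{t:graphasstrunc}, converting it into $\KG$. The facet identification and the full face poset are then read off from Theorem~\ref{t:graphass}, which already contains the flagness and compatibility analysis for ordinary tubes, rather than redone from scratch. If you wish to keep your direct approach, you need to import from \cite{cd} not only the truncation lemma but also the argument that the nerve of the resulting facet arrangement is the flag complex on compatible tubes, and then extend that argument to accommodate the square facets.
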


\begin{proof}
The initial truncation of the vertex of  $\square_G$ where all $n$ round-tubed facets intersect creates an $(n-1)${-}simplex.  The labeling of this simplex is the round tube corresponding to the entire graph $G$.  The round-tube labeled $k$-faces of $\square_G$ induce a labeling of the $(k-1)$-faces of this simplex, which can readily be seen as $\triangle_G$.  Since the graph cubeahedron is obtained by iterated truncation of the faces labeled with \emph{round tubes}, we see from Theorem~\ref{t:graphasstrunc} that this converts $\triangle_G$ into the graph associahedron $\KG$.  The $2n$ facets of $\CG$ coming from $\square_G$ are in bijection with the design tubes of $G$ capturing one vertex.  The remaining facets of $\CG$ as well as its face poset structure follow immediately from Theorem~\ref{t:graphass}.
\end{proof}

\begin{cor} \label{c:enum}
Let $G$ be a graph with $n$ vertices, and let $|\CG|$ and $|\KG|$ be the number of facets of $\CG$ and $\KG$ respectively.  Then $|\CG| = |\KG| + 1 + n.$
\end{cor}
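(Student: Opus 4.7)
The plan is to count facets on both sides using the face-poset descriptions already established. By Theorem~\ref{t:graphass}, the facets of $\KG$ are the minimal non-empty tubings, i.e., tubings consisting of a single tube, so $|\KG|$ equals the number of tubes of $G$ (connected, \emph{proper} induced subgraphs). By Theorem~\ref{t:graphcube}, the facets of $\CG$ are likewise the single-design-tube tubings, so $|\CG|$ equals the total number of design tubes of $G$.

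Next, I would partition the design tubes of $G$ according to type. The square tubes are, by definition, the single nodes of $G$, giving exactly $n$ of them. The round tubes are the connected induced subgraphs of $G$ that are \emph{not required to be proper}; this is the only subtle point — the definition of round tube drops the properness requirement that appears in the definition of an ordinary tube. Therefore the round tubes consist of the tubes of $G$ (of which there are $|\KG|$) together with one extra round tube given by the entire graph $G$ itself, contributing $|\KG|+1$.

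Adding the two counts yields
\[
|\CG| \;=\; \bigl(|\KG| + 1\bigr) + n,
\]
which is the desired identity.

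The only place where care is needed is the bookkeeping of the single round tube $G$ versus the tubes of $\KG$; I do not expect any genuine obstacle, since the facet-counting follows immediately once the definitions of tube and round tube are compared side-by-side.
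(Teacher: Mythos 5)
Your count is correct and is exactly the argument the paper leaves implicit: the corollary follows from Theorem~\ref{t:graphcube} by observing that facets correspond to single design tubes, of which there are $n$ square ones and $|\KG|+1$ round ones (the tubes of $G$ plus the whole graph, since round tubes need not be proper). No gaps.
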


\subsection{}

We are now in position to justify the introduction of graph cubeahedra in the context of moduli spaces.

\begin{thm} \label{t:moduli-cube}
Let $G$ be a cycle with $m$ nodes.  The moduli space $\Mod_{(0,2)(0, \la m, 0 \ra)}$ of $m$ marked points on one boundary circle of an annulus is the graph cubeahedron $\CG$.  We denote this special case as $\Y_m$ and call it the \emph{halohedron}.
\end{thm}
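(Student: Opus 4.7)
The plan is to identify the face poset of the moduli space $\Mod_{(0,2)(0,\la m, 0 \ra)}$ with the face poset of $\CG$ (design tubings of the $m$-cycle $G$, by Theorem~\ref{t:graphcube}); since the latter is already known to be a convex polytope, this identifies the moduli space as the halohedron. I would begin by labeling the marked points $1, \ldots, m$ cyclically on the outer boundary in correspondence with the nodes of $G$, so that each gap between consecutive marked points corresponds to an edge of $G$. The dimensions already match: Theorem~\ref{t:dim} gives $\dim \Mod_{(0,2)(0,\la m, 0 \ra)} = m = \dim \CG$.

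The core of the argument is a bijection $\phi$ between design tubes of $G$ and codimension-one boundary strata of the moduli space. I would define $\phi$ on the three types of design tubes as follows: a proper multi-node round tube $T$ of size $2 \leq |T| \leq m-1$ is sent to the outer-to-outer arc whose disk side carries exactly the marked points of $T$; the unique full round tube $\{1,\ldots,m\}$ is sent to the 1-loop around the empty inner boundary (whose contraction pinches the inner boundary to a puncture); and the $2m$ single-node design tubes (round $\{v\}_\circ$ and square $\{v\}_\square$ for each $v$) are sent bijectively to the $2m$ codimension-one facets arising from single-marked-point degenerations, which split geometrically into $m$ outer-to-outer arcs that isolate a single marked point (stable after nodal normalization) and $m$ outer-to-inner arcs (one per gap). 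The counts agree, giving the $m^2 + 1$ total facets predicted by Corollary~\ref{c:enum}.

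Next I would verify that $\phi$ preserves compatibility. For two proper round tubes, the ``not adjacent and not intersecting'' condition translates to the geometric fact that two outer-to-outer arcs admit non-crossing representatives precisely when their disk sides are nested or cyclically disjoint. For a round tube $T$ and a square tube $\{v\}$ with $v \in T$, the nesting constraint forces the corresponding arcs to cross because the single-point piece for $\{v\}$ is trapped inside the cut-off region of $T$; for $v \notin T$ the arcs can be drawn disjointly. For two square tubes at distinct nodes, the design-tube compatibility rule matches the fact that outer-to-inner arcs at different gaps are realized by disjoint radial curves. Finally, the full round tube (corresponding to the 1-loop) is compatible with every other design tube, matching the fact that the inner-boundary loop can coexist with every arc. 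Once compatibility is established for pairs, a $k$-element compatible design tubing corresponds to a $k$-element compatible collection of arcs and loops, which by Lemma~\ref{l:weight} is a codimension-$k$ stratum; the bijection $\phi$ extends to an isomorphism of face posets, and the theorem follows.

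The main obstacle will be step two: pinning down the correct assignment of single-node round versus square tubes to the two $m$-element families of single-marked-point degenerations, and verifying that the resulting compatibility matches the combinatorial rules (in particular the round-round adjacency failure for $\{v\}_\circ$ and $\{v+1\}_\circ$, and the square-round nesting failure for $\{v\}_\square$ versus $\{v\}_\circ$). This requires a detailed local analysis of arcs near each marked point, including the subtle distinction between isotopy classes of arcs that share endpoints in the same gap but pass the marked point on opposite sides, so that the geometric non-crossing behavior precisely mirrors the combinatorial nesting-and-adjacency rules governing design tubes.
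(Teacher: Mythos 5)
Your overall strategy---matching the stratification of $\Mod_{(0,2)(0,\la m,0\ra)}$ with design tubings of the $m$-cycle and invoking Theorem~\ref{t:graphcube} for polytopality---is the same as the paper's, but your dictionary between tubes and strata is off by one, and this breaks the proof in two concrete places. First, the $m$ ``outer-to-outer arcs that isolate a single marked point,'' which you assign to single-node round tubes, do not exist as strata of the moduli space: such a curve cuts off a disk whose boundary carries only one marked point and one nodal point, which is unstable (the dimension count $6g+3h-6+2n+m = 3-6+2$ is negative; equivalently the disk has a one-parameter automorphism group), and nodal normalization does not cure this. These are precisely the trivial arcs excluded in Figure~\ref{f:legal}(d). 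At the same time your map misses $m$ genuine codimension-one strata: the arcs with both endpoints in a single gap that wrap around the inner circle, whose disk side captures \emph{all} $m$ marked points. So although your total count $m^2+1$ agrees numerically with Corollary~\ref{c:enum}, the list of facets on the moduli side is wrong.

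Second, even for multi-node round tubes, your unshifted assignment ($T \mapsto$ the arc whose disk side carries exactly the points of $T$) fails to preserve compatibility: the tubes $\{1,2\}$ and $\{3,4\}$ in the cycle are adjacent, hence \emph{incompatible} as design tubes, but the arcs cutting off the disjoint blocks $\{1,2\}$ and $\{3,4\}$ can be drawn disjointly, hence are \emph{compatible}---contradicting your claimed equivalence that non-crossing corresponds to ``nested or cyclically disjoint.'' The paper's bijection avoids both problems by an index shift: an arc capturing $k$ marked points corresponds to a round tube on $k-1$ nodes (so single-node round tubes $\leftrightarrow$ arcs capturing two points, and tubes on $m-1$ nodes $\leftrightarrow$ arcs capturing all $m$ points), the 1-loop around the unmarked circle $\leftrightarrow$ the full round tube, and the $m$ radial arcs $\leftrightarrow$ the square tubes. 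Under this shift, adjacency of two round tubes translates into the corresponding arcs sharing a marked point, which forces them to cross; this is exactly what makes the poset isomorphism immediate. Your argument would go through if you replaced your $\phi$ with this shifted dictionary.
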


\begin{proof}
There are three kinds of boundary appearing in the moduli space.  The loop capturing the unmarked circle of the annulus is in bijection with the round tube of the entire graph, as shown in Figure~\ref{f:bi-cycle}(a).
\begin{figure}[h]
\includegraphics{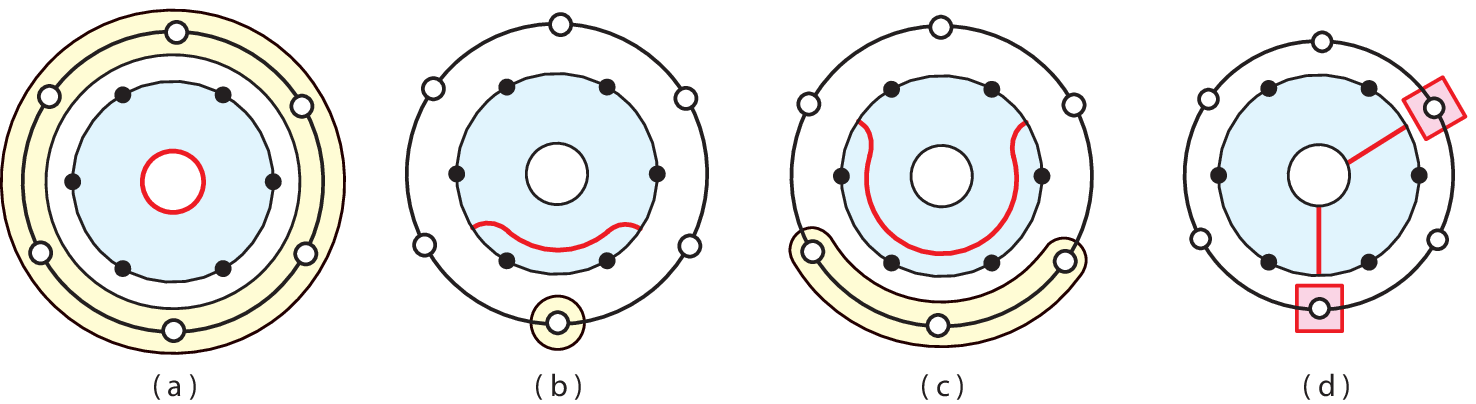}
\caption{Bijection between $\Mod_{(0,2)(0, \la m, 0 \ra)}$ and $\Y_m$.}
\label{f:bi-cycle}
\end{figure}
The arcs capturing $k$ marked boundary points correspond to the round tube surrounding $k-1$ nodes of the cycle, displayed in parts (b) and (c).  Finally, arcs between the two boundary circles of the annulus are in bijection with associated square tubes of the graph, as in (d).  The identification of these poset structures are then immediate.
\end{proof}

Figure~\ref{f:cube-y2}(a) shows the labeling of $\Y_2$ based on arc and loops on an annulus. By construction of the graph cubeahedron, this pentagon should be viewed as a truncation of the square.  The 3D halohedron $\Y_3$ is depicted on the right side of Figure~\ref{f:cube-y3}.

\begin{figure}[h]
\includegraphics[width=\textwidth]{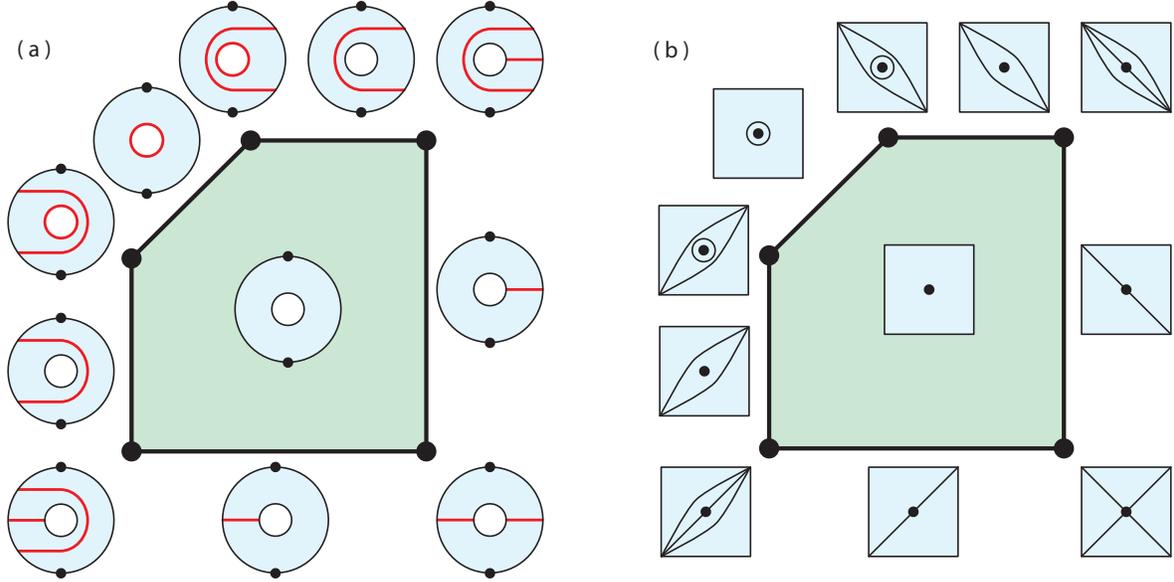}
\caption{(a) The two-dimensional $\Y_2$ and (b) its labeling with polygons.}
\label{f:cube-y2}
\end{figure}

\begin{rem}
It is natural to expect these $\Y_m$ polytopes to appear in facets of other moduli spaces.  Figure~\ref{f:pentagon-face} shows why three pentagons of Figure~\ref{f:m0300} are actually halohedra in disguise:  The arc is contracted to a marked point, which can be doubled and pulled open due to normalization.  The resulting surface is the pentagon $\Y_2$.  
\end{rem}

\begin{figure}[h]
\includegraphics{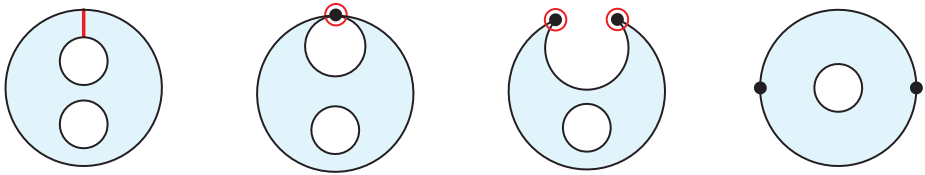}
\caption{Certain pentagons of Figure~\ref{f:m0300} are actually the polygon $\Y_2$.}
\label{f:pentagon-face}
\end{figure}

%
%
\section{Combinatorial and Algebraic Structures}
\label{s:combin}
\subsection{}

We close with examining the graph cubeahedron $\CG$ in more detail, especially in the cases when $G$ is a path and a cycle.  
We have discussed in Sections~\ref{s:polyass} and~\ref{s:polycyc} the poset structure of associahedra $K_n$ and cyclohedra $W_n$ in terms of polygons.   We now talk about the polygonal version of the halohedron $\Y_n$:  Consider a convex $2n$-sided centrally symmetric polygon with an additional central vertex, as given in Figure~\ref{f:cube-polygon}(a).  The three kinds of boundary strata which appear in $\Mod_{(0,2)(0, \la m, 0 \ra)}$ can be interpreted as adding diagonals to the symmetric polygon.  Part (b) shows that a ``circle diagonal'' can be drawn around the vertex, parts (c) and (d) show how a pair of centrally symmetric diagonals can be used, and a diameter can appear as in part (e).

\begin{figure}[h]
\includegraphics{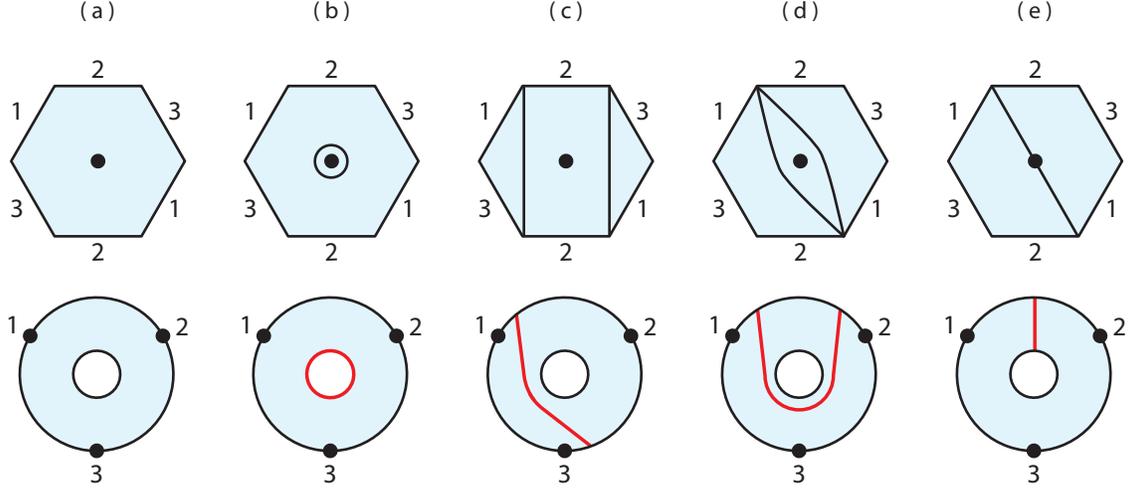}
\caption{Polygonal labeling of $\Y_n$.}
\label{f:cube-polygon}
\end{figure}

Due to the central vertex of the symmetric polygon, it is important to distinguish a pair of diameters as in (d) versus one diameter as in (e).  Indeed, the pair of diagonals are compatible with the circular diagonal, whereas the diameter is not since they intersect.  On the other hand, two diameters are compatible since they are considered not to cross due to the central vertex. 
Figure~\ref{f:cube-y2}(b) shows the case of $\Y_2$ now labeled using polygons, showing the compatibility of the different diagonals.
We summarize this below.  

\begin{defn}
Let $C(n)$ be the poset of all diagonalizations of a convex $2n$-sided centrally symmetric polygon with a central vertex, ordered such that $a \prec a'$ if $a$ is obtained from $a'$ by adding new noncrossing diagonals. Here, a diagonal will either mean a circle around the central vertex,  a pair of centrally symmetric diagonals or a diameter of the polygon. The halohedron $\Y_n$ is a convex polytope of dimension $n$ whose face poset is isomorphic to $C(n)$.
\end{defn}

\subsection{}

Let us now turn to the case of $\CG$ when $G$ is a path:

\begin{prop} \label{p:k-path}
If $G$ is a path with $n$ nodes, then $\CG$ is the associahedron $K_{n+2}$.
\end{prop}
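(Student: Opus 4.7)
The plan is to exhibit an explicit bijection $\phi$ between the design tubes of the path $G$ with $n$ nodes and the diagonals of a convex $(n+3)$-gon, and to show that $\phi$ sends compatibility of design tubes to non-crossing of diagonals. By Theorem~\ref{t:graphcube}, $\CG$ has face poset isomorphic to the design tubings of $G$, while $K_{n+2}$ has face poset given by diagonalizations of an $(n+3)$-gon; both polytopes have dimension $n$, so a compatibility-preserving bijection on facets extends to an isomorphism of the full face posets, forcing $\CG \cong K_{n+2}$.

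Label the path nodes as $1,\ldots,n$ and the $(n+3)$-gon vertices as $v_0, v_1, \ldots, v_{n+2}$. I would define
\[
\phi\bigl([i,j]_{\mathrm{round}}\bigr) \;=\; (v_{i-1},\, v_{j+1}), \qquad \phi\bigl(\{i\}_{\mathrm{square}}\bigr) \;=\; (v_i,\, v_{n+2}).
\]
A short check confirms that the image always lands on a genuine diagonal: the indices $i-1$ and $j+1$ differ by at least two, and $v_i$ is never adjacent to $v_{n+2}$ for $1 \leq i \leq n$. Counting gives $\binom{n+1}{2} + n = \tfrac{n(n+3)}{2}$ design tubes matched with $\binom{n+3}{2} - (n+3) = \tfrac{n(n+3)}{2}$ diagonals, and injectivity is clear from the definition of $\phi$.

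The heart of the argument is verifying compatibility preservation, which I would split into three cases using the standard characterization that two polygon diagonals cross if and only if their endpoints interleave on the boundary. For two round tubes, adjacency or proper intersection in $G$ translates into interleaving endpoints $a < c < b < d$, forcing the diagonals to cross, while nesting or separation by a gap yields nested or disjoint diagonals that do not cross. For two distinct square tubes, the corresponding diagonals always share the vertex $v_{n+2}$ and so never cross, mirroring the fact that distinct singleton tubes are never nested and hence always compatible. For a mixed pair $[i,j]_{\mathrm{round}}$ and $\{i'\}_{\mathrm{square}}$, nesting (the condition $i \le i' \le j$) places $v_{i'}$ strictly between $v_{i-1}$ and $v_{j+1}$ on one arc while $v_{n+2}$ sits on the opposite arc, forcing the diagonals to cross; non-nested positions of $i'$ produce diagonals sharing an endpoint or lying in disjoint arcs.

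I expect the main obstacle to lie in the mixed case: the asymmetric role of the distinguished vertex $v_{n+2}$ and the boundary subcases $i' = i-1$ and $i' = j+1$ (where the diagonals meet at a shared vertex rather than cross) require careful bookkeeping to ensure that the geometric non-crossing condition matches the combinatorial definition of compatibility in every configuration.
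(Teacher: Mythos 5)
Your proposal is correct, but it takes a different (more hands-on) route than the paper. The paper does not go to the polygon model at all: it sets $G^*$ to be a path with $n+1$ nodes, invokes Corollary~\ref{c:asscyc} to identify $\KG^*$ with $K_{n+2}$, and then gives a bijection between design tubes of $G$ and ordinary tubes of $G^*$ --- round tubes map to themselves, and the square tube at node $k$ maps to the tube $\{k+1,\ldots,n+1\}$ --- observing that compatibility is preserved essentially because round and square tubes of $G$ are never nested. Your map is really the composite of that bijection with the standard tube--diagonal dictionary of Proposition~\ref{p:assoc}: your distinguished vertex $v_{n+2}$ plays exactly the role of the extra node of $G^*$, so the underlying idea is the same. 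What your version buys is independence from Corollary~\ref{c:asscyc} and an explicit verification, via the interleaving criterion, that compatibility corresponds to non-crossing in every case (including the boundary cases $i'=i-1$ and $i'=j+1$, which the paper's one-sentence compatibility check glosses over); what it costs is precisely that case analysis, which the paper's tube-to-tube formulation avoids by keeping everything in the language of Theorem~\ref{t:graphass}. Your counting check ($\tfrac{n(n+3)}{2}$ on both sides) and the observation that a compatibility-preserving and -reflecting bijection on single tubes/diagonals induces an isomorphism of face posets are both sound.
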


\begin{proof}
Let $G^*$ be a path with $n+1$ nodes.  From Corollary~\ref{c:asscyc}, the associahedron $K_{n+2}$ is the graph associahedron $\KG^*$.  We therefore show a bijection between design tubings of $G$ (a path with $n$ nodes) and regular tubings of $G^*$ (a path with $n+1$ nodes), as in Figure~\ref{f:bijection-path-assoc}.  Each round tube of $G$ maps to its corresponding tube of $G^*$.  Any square tube around the $k$-th node of $G$ maps to the tube of $G$ surrounding the $\{k+1, k+2, \ldots, n, n+1\}$ nodes.  This mapping between tubes naturally extends to tubings of $G$ and $G^*$ since the round and square tubes of $G$ cannot be nested.  The bijection follows.
\end{proof}

\begin{figure}[h]
\includegraphics{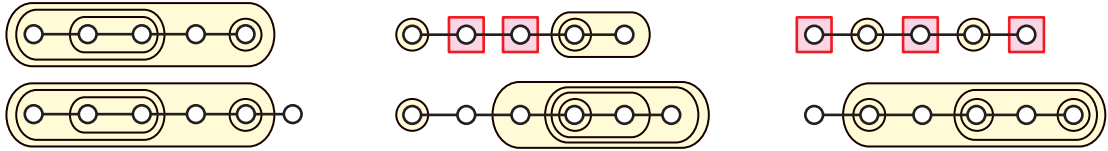}
\caption{Bijection between design tubes and regular tubes on paths.}
\label{f:bijection-path-assoc}
\end{figure}

We know from Theorem~\ref{t:graphasstrunc} that associahedra can be obtained by truncations of the simplex.  But since $\CG$ is obtained by truncations of cubes, Proposition~\ref{p:k-path} ensures that associahedra can be obtained this way as well.  Such an example is depicted in Figure~\ref{f:cube-k6}, where the 4D associahedron $K_6$ appears as iterated truncations of the 4-cube.

\begin{figure}[h]
\includegraphics{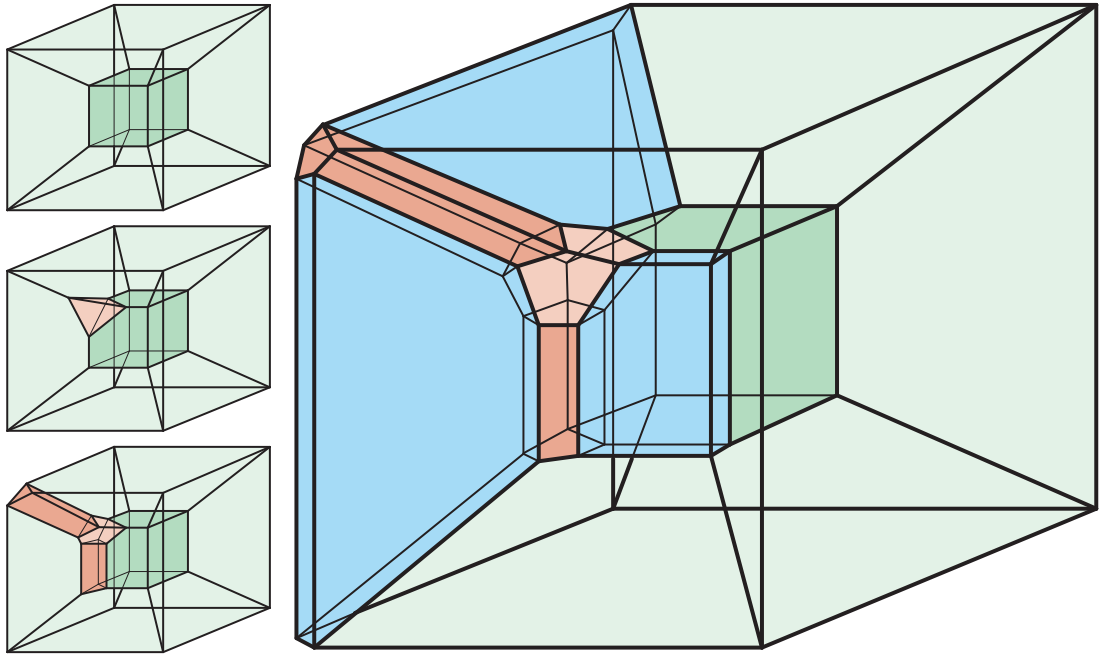}
\caption{The iterated truncation of the 4-cube resulting in $K_6$.}
\label{f:cube-k6}
\end{figure}

\hide{ 
\begin{cor}
When  $G$ is a path, we see KG and MG are both associahera, where latter is a dimension greater.
\end{cor}

\begin{proof}
$|KG| = 1/2 (n-1)(n+2)$ when $|G| =n$.
Then 
$1/2(n-1)(n+2) + 1 + n = \cdots = 1/2 (n)(n+3)$.
\end{proof}

}

\begin{prop}\label{t:faces}
The facets of $\Y_n$ are 
\begin{enumerate}
\item one copy of $W_n$
\item $n$ copies of $K_{n+1}$ and
\item $n^2 -n$ copies of $K_m \times \Y_{n-m+1}$.
\end{enumerate}
\end{prop}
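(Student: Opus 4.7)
The plan is to use the identification $\Y_n = \CG$ for $G$ the $n$-cycle from Theorem~\ref{t:moduli-cube}, so that by Theorem~\ref{t:graphcube} facets of $\Y_n$ correspond bijectively to individual design tubes of $G$.  For an $n$-cycle these design tubes split into three natural classes: (i) the unique round tube consisting of the entire cycle; (ii) the $n$ square tubes, one at each node; and (iii) the $n(n-1)$ proper round tubes, namely the subpaths of $j$ consecutive nodes for $1 \le j \le n-1$, each admitting $n$ rotational positions.  The total $1 + n + n(n-1) = n^2 + 1$ agrees with Corollary~\ref{c:enum}.

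Next I would identify the polytope supporting each facet via the geometric bijection from the proof of Theorem~\ref{t:moduli-cube}.  The full-cycle round tube corresponds to the $1$-loop around the unmarked boundary circle; contracting this loop (Figure~\ref{f:contract-loop}(a)) collapses the inner circle into a puncture, producing a punctured disk with $n$ marked boundary points, whose moduli space is the cyclohedron $W_n$ by Proposition~\ref{p:cyclo}.  Each of the $n$ square tubes corresponds to an arc between the two boundary circles; the normalized contraction (Figure~\ref{f:contract-arc-2}) collapses the annulus to a disk and doubles the nodal point into two new marked boundary points, giving a disk with $n+2$ boundary marked points and moduli $K_{n+1}$ by Proposition~\ref{p:assoc}.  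Finally, a proper round tube on $m-1$ consecutive nodes (with $2 \le m \le n$) corresponds to an arc on the outer boundary enclosing exactly $m$ of the marked points; contracting this arc (Figure~\ref{f:contract-arc}) separates the annulus into a disk carrying those $m$ points together with the double point and an annulus carrying the remaining $n-m$ points together with the double point, with its unmarked inner boundary intact.

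The disk piece has $m+1$ boundary special points (stable since $m \ge 2$) with moduli $K_m$, while the annular piece has $n-m+1$ marked points on one boundary with moduli $\Y_{n-m+1}$; the stratum is therefore the product $K_m \times \Y_{n-m+1}$.  Summing over $m \in \{2, \ldots, n\}$ yields $n(n-1) = n^2 - n$ such product facets, matching the claimed count and completing the enumeration.  The principal technical point, which I consider the main obstacle, is verifying that the codimension-one stratum arising from a separating arc-contraction literally splits as the Cartesian product of the two component moduli spaces.  I would justify this by appealing to the Fenchel-Nielsen coordinate description underlying Liu's construction of $\Mod_{(g,h)(n, \bold m)}$: after contraction and normalization, the Fenchel-Nielsen coordinates of the two connected components decouple, so the boundary stratum factors naturally as the product of the two component moduli.
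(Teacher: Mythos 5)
Your proof is correct and follows the same basic decomposition as the paper---enumerating the three kinds of codimension-one strata (the loop around the unmarked circle, arcs joining the two boundary circles, and arcs on the marked boundary) and identifying them as $W_n$, $K_{n+1}$, and $K_m \times \Y_{n-m+1}$ respectively---but it differs in two places worth noting. For the arcs between the two boundary circles, the paper identifies the facet combinatorially with $\CG$ for $G$ a path on $n-1$ nodes and then invokes Proposition~\ref{p:k-path}, whereas you contract and normalize the arc to get a disk with $n+2$ boundary marked points and apply Proposition~\ref{p:assoc} directly; both are valid, and yours stays entirely on the moduli-space side. More substantively, the paper obtains the count $n^2-n$ for the third family only indirectly, via Corollary~\ref{c:enum} and the known facet count of the cyclohedron, while you count the proper round tubes of the $n$-cycle (equivalently, the arcs capturing $m$ points for $2 \le m \le n$ in each of $n$ positions) directly; your count is more self-contained and has the added benefit of independently confirming Corollary~\ref{c:enum} in this case. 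Finally, you explicitly flag the need to justify that a separating contraction yields a genuine product of moduli spaces and propose the Fenchel--Nielsen decoupling argument; the paper simply asserts the product structure, so your version is, if anything, more careful on this point.
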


\begin{proof}
Recall there are three kinds of codimension one boundary strata appearing in the moduli space $\Mod_{(0,2)(0, \la n, 0 \ra)}$.  First, there is the unique loop around the unmarked boundary circle, which Proposition~\ref{p:cyclo} reveals as the cyclohedron $W_n$.  Second, there exist arcs between distinct boundary circles, as show in Figure~\ref{f:bijection-path}(a).    This figure establishes a bijection between this facet and $\CG$, where $G$ is a path with $n-1$ nodes.  By Theorem~\ref{p:k-path} above, such facets are $K_{n+1}$ associahedra.  
There are $n$ such associahedra since there are $n$ such arcs.

\begin{figure}[h]
\includegraphics{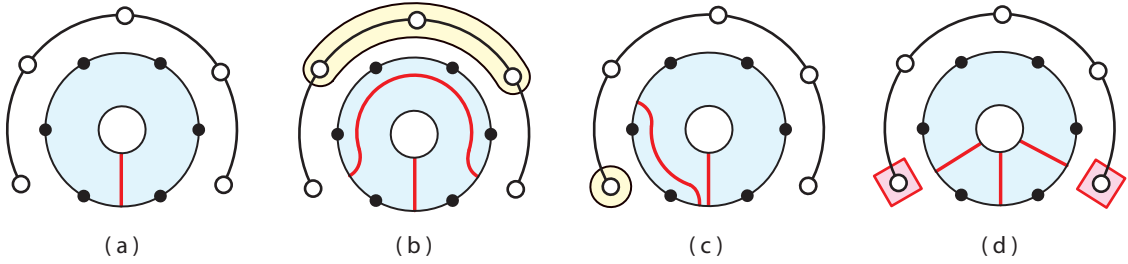}
\caption{Bijection between certain facets of halohedra and associahedra.}
\label{f:bijection-path}
\end{figure}

Finally, there exist arcs capturing $m$ marked boundary points.  Such arcs can be contracted and then normalized, as shown in Figure~\ref{f:contract-arc}.  This leads to a product structure of moduli spaces $\Mod_{(0,1)(0, m+1)}$ and $\Mod_{(0,2)(0, \la n-m+1, 0 \ra)}$, where the +1 in both markings appears from the contracted arc. 
Proposition~\ref{p:assoc} and Theorem~\ref{t:moduli-cube} show this facet to be $K_m \times \Y_{n-m+1}$.  
When $G$ is a cycle with $n$ nodes, we see from Corollary~\ref{c:enum} that the total number of facets of $\Y_n$  equals the number of facets of $\KG$ plus $n + 1$.  By Corollary~\ref{c:asscyc}, $\KG$ is the cyclohedron $W_n$, known to have $n^2-n$ facets.
\end{proof}

\subsection{}

From an algebraic perspective, the face poset of the associahedron $K_n$ is isomorphic to the poset of ways to associate $n$ objects on an interval.   Indeed, the associahedra characterize the structure of weakly associative products.  Classical examples of weakly associative product structures are the $A_n$ spaces, topological $H$-spaces with weakly associative multiplication of points. The notion of ``weakness'' should be understood as ``up to homotopy,'' where there is a path in the space from $(ab)c$ to $a(bc)$.  
The \emph{cyclic} version of this is the cyclohedron $W_n$, whose face poset is isomorphic to the ways to associate $n$ objects on a circle.  We establish such an algebraic structure for the halohedron $\Y_n$.   We begin by looking at the algebraic structure behind another classically known polytope, the multiplihedron.
 
The \emph{multiplihedra} polytopes, denoted $J_n$, were first discovered by Stasheff in \cite{jds2}.  They play a similar role for maps of loop spaces as associahedra play for loop spaces. 
The multiplihedron $J_n$ is a polytope of dimension $n-1$ whose vertices correspond to ways of associating $n$ objects \emph{and} applying an operation $f$.  At a high level, the multiplihedron is based on maps $f: A \to B$, where neither the range nor the domain is strictly associative.  The left side of Figure~\ref{f:multiplihedron} shows the 2D multiplihedron $J_3$ with its vertices labeled.  These polytopes have appeared in numerous areas related to deformation and category theories.  Notably, work by Forcey \cite{for} finally proved $J_n$ to be a polytope while giving it a geometric realization.  

\begin{figure}[h]
\includegraphics{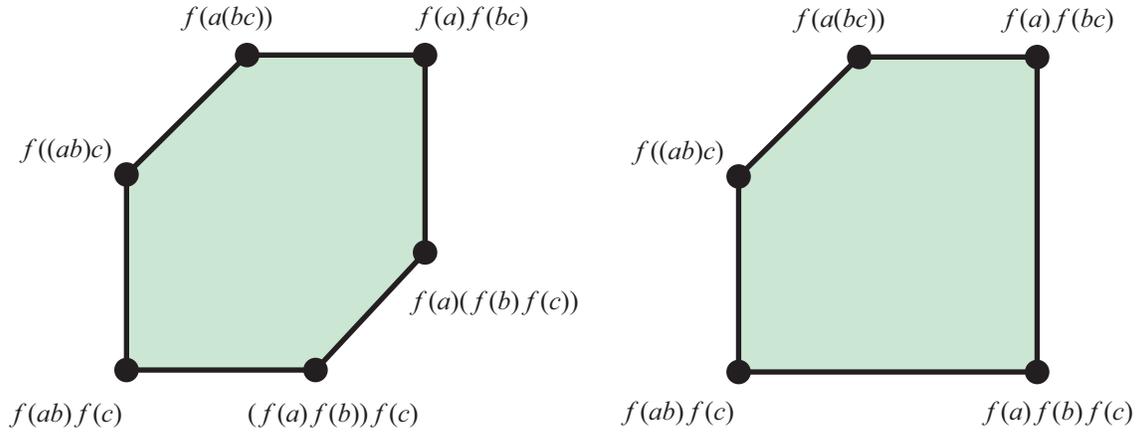}
\caption{The multiplihedron and the graph cubeahedron of a path.}
\label{f:multiplihedron}
\end{figure}

Recently, Mau and Woodward \cite{mw} were able to view the multiplihedra as a compactification of the moduli spaces of \emph{quilted} disks, interpreted from the perspective of cohomological field theories.  Here, a quilted disk is defined as a closed disk with a circle (``quilt'') tangent to a unique point in the boundary, along with certain properties.  

Halohedra and the more general graph cubeahedra fit into this larger context.  Figure~\ref{f:polytope-moduli2} serves as our guide.  Part (a) of the figure shows the moduli version of the halohedron.  Two of its facets are seen as (b) the cyclohedron and (c) the associahedron.  If the interior of the annulus is now colored black, as in  Figure~\ref{f:polytope-moduli2}(d), viewed not as a topological hole but a ``quilted circle,'' we obtain a cyclic version of the multiplihedron.  When an arc is drawn from the quilt to the boundary as in part (e), symbolizing (after contraction) a quilted circle tangent to the boundary, we obtain the quilted disk viewpoint of the multiplihedron of Mau and Woodward.

\begin{figure}[h]
\includegraphics[width=\textwidth]{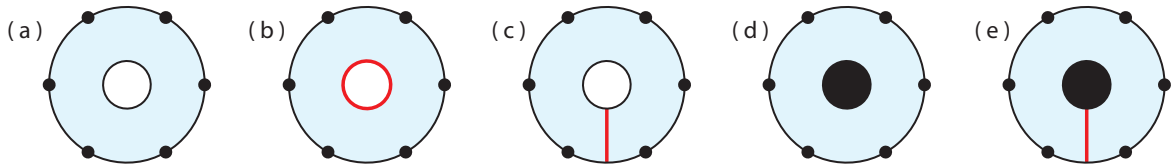}
\caption{The polytopes (a) halohedra, (b) cyclohedra, (c) associahedra, (d) cyclo-multiplihedra, (e) multiplihedra.}
\label{f:polytope-moduli2}
\end{figure}

The multiplihedron is based on maps $f: A \to B$, where neither the range nor the domain is strictly associative.  From this perspective, there are several important quotients of the multiplihedron, as given by the following table.
\begin{table}[h]
\noindent \renewcommand{\arraystretch}{1.6}{
\begin{tabular}{|l|c|c|c|}
\hline
\emph{Strictly Assocative} & \emph{$G$ path} & \emph{$G$ cycle } & \emph{general $G$} \\ \hline \hline
Both $A$ and $B$ & cube \cite{bv} & cube & cube \\
Only $A$ & composihedra \cite{for} & cycle composihedra & graph composihedra \\
Only $B$ & associahedra & halohedra & graph cubeahedra \\
Neither $A$ nor $B$ & \ multiplihedra \cite{jds2} \ & \ cycle multiplihedra \ & \ graph multiplihedra \cite{df} \ \\ \hline 
\end{tabular}}
\end{table}
The case of associativity of both range and domain is discussed by Boardman and Vogt \cite{bv}, where the result is shown to be the $n$-dimensional cube.  
The case of an associative domain is described by Forcey \cite{for}, where the new quotient of the multiplihedron is called the \emph{composihedron}; these polytopes are the shapes of the axioms governing composition in higher enriched category theory.  The classical case of a strictly associative range (for $n$ letters in a path) was originally described in \cite{jds2}, where Stasheff shows that the multiplihedron $J_n$ becomes the associahedron $K_{n+1}$.  The right side of Figure~\ref{f:multiplihedron} shows the underlying algebraic structure of the 2D associahedron viewed as the graph cubeahedron of a path by Proposition~\ref{p:k-path}.


\begin{figure}[h]
\includegraphics{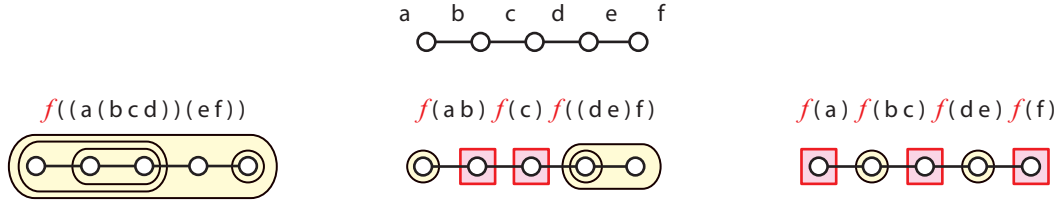}
\caption{Bijection between design tubes and associativity.}
\label{f:bi-path-multi}
\end{figure}

Figure~\ref{f:bi-path-multi} sketches a bijection between the associahedra (as design tubings of 5 nodes from Figure~\ref{f:bijection-path-assoc}) and associativity/function operations on 6 letters.  
Recall that the face poset of the cyclohedron $W_n$ is isomorphic to the ways to associate $n$ objects cyclically.  This is a natural generalization of $K_n$ where its face poset is recognized as the ways of associating $n$ objects linearly.  Instead, if we interpret $K_{n+1}$ in the context of Proposition~\ref{p:k-path}, the same natural cyclic generalization gives us the halohedron $\Y_n$.  In a broader context, the generalization of the multiplihedron to arbitrary graph is given in \cite{df}.  A geometric realization of these objects as well as their combinatorial interplay is provided there as well.

%
%
\bibliographystyle{amsplain}

\end{document}